\title[Smooth rational surfaces]
{Smooth rational surfaces violating Kawamata--Viehweg vanishing} 
\author{Paolo Cascini and Hiromu Tanaka} 
\subjclass[2010]{14E30, 14J26.}
\keywords{rational surfaces, Kawamata--Viehweg vanishing theorem, positive characteristic}
\address{Department of Mathematics, Imperial College, London, 180 Queen's Gate, 
London SW7 2AZ, UK} 
\address{National Center for Theoretical Sciences
No. 1 Sec. 4 Roosevelt Rd.,
National Taiwan University
Taipei, 106, Taiwan}
\email{p.cascini@imperial.ac.uk}
\address{Department of Mathematics, Imperial College, London, 180 Queen's Gate, 
London SW7 2AZ, UK} 
\email{h.tanaka@imperial.ac.uk}
\thanks{Both of the authors were funded by EPSRC}
\newcommand{\Spec}[0]{{\operatorname{Spec}}}
\newcommand{\Ex}[0]{{\operatorname{Ex}}}
\newtheorem{thm}{Theorem}[section]
\newtheorem{lem}[thm]{Lemma}
\newtheorem{cor}[thm]{Corollary}
\newtheorem{prop}[thm]{Proposition}
\theoremstyle{definition}
\newtheorem{rem}[thm]{Remark}
\newtheorem{nota}[thm]{Notation}         
\newtheorem{step}{Step}
\newcommand{\MO}{\mathcal{O}}
\newcommand{\Q}{\mathbb{Q}}
\newcommand{\Z}{\mathbb{Z}}
\newcommand{\F}{\mathbb{F}}
\begin{document}

\maketitle

\begin{abstract}
We show that over any algebraically closed field of positive characteristic, 
there exists a smooth rational surface which violates Kawamata--Viehweg vanishing. 
%We study some basic properties of such surfaces mainly in characteristic two. 
\end{abstract}

\tableofcontents

\setcounter{section}{0}

\section{Introduction}

It is a well known fact that Kodaira vanishing fails in positive characteristic \cite{Ray78}. Nevertheless, 
it has often been  believed that a stronger version, namely Kawamata--Viehweg vanishing,
%(e.g. see \cite[Theorem 2.64]{KM98}),
holds over a smooth rational surface (e.g. see \cite{ter98,xie10}). 

In this note, we show that this is indeed not true: 
%
%One of the main difficulties of birational geometry in positive characteristic 
%is the failure of the Kodaira vanishing theorem (\cite{Ray78}). 
%This is because many results in the minimal model programme 
%depend on it or its generalisation: the Kawamata--Viehweg vanishing theorem 
%(cf. \cite{KM98}, \cite{Kol13}). 
%One of the main purposes of this note is to give counter-examples to the Kawamata--Viehweg vanishing theorem which are smooth rational surfaces. 

\begin{thm}[Theorem~\ref{t-cex-kvv}]\label{intro-kvv}
Let $k$ be a field of positive characteristic. 
Then there exist a smooth projective rational surface $X$ over $k$, a Cartier divisor $D$, 
and a $\Q$-divisor $\Delta\ge 0$ such that
\begin{enumerate}
\item $(X, \Delta)$ is klt, 
\item $D-(K_X+\Delta)$ is nef and big, and  
\item $H^1(X, \MO_X(D)) \neq 0$. 
\end{enumerate}
\end{thm}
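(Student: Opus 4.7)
The plan is to construct $X$ explicitly as an iterated blow-up of $\mathbb{P}^2_k$ (or a Hirzebruch surface) along a carefully chosen configuration of points in characteristic $p$, and then to exhibit the pair $(D,\Delta)$ by hand. The philosophy goes back to Raynaud and Mukai: counterexamples to vanishing in positive characteristic come from the failure of Frobenius splitting on some auxiliary curve or fibration, and the task here is to import such a failure onto a smooth rational surface without destroying rationality. A direct attempt to use a Raynaud-type surface will not work because those are not rational, so one has to convert the Frobenius anomaly into something visible after a blow-up of $\mathbb{P}^2_k$.

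Concretely, I would begin from a characteristic $p$ ingredient, such as a Tango-Raynaud style pencil of curves in $\mathbb{P}^2_k$ whose generic member acquires a pathology after pullback by Frobenius, or an inseparable cover controlled by a $p$-torsion line bundle on an auxiliary curve mapping to $\mathbb{P}^1_k$. Resolving the base locus of this pencil yields a smooth projective rational surface $X$ together with a distinguished fibration $f \colon X \to \mathbb{P}^1_k$ and a collection of exceptional curves with prescribed self-intersections. I would then take $D$ to be a $\Z$-linear combination of the total transform of a line in $\mathbb{P}^2_k$ and of the exceptional divisors, chosen so that $D$ is big, and set $\Delta$ to be a $\Q$-divisor supported on a simple normal crossings arrangement of exceptional curves and strict transforms of fibres, with all coefficients strictly less than $1$ so that $(X,\Delta)$ is automatically klt. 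A numerical intersection check on the curves in the support of $\Delta$ and on the fibres of $f$ should show that $D-(K_X+\Delta)$ is nef and big, while the nonvanishing $H^1(X,\MO_X(D))\neq 0$ is extracted via the Leray spectral sequence for $f$: the problem reduces to the nontriviality of an explicit $H^0(\mathbb{P}^1_k, R^1 f_*\MO_X(D))$ or $H^1(\mathbb{P}^1_k,f_*\MO_X(D))$ that transparently encodes the Frobenius anomaly of the starting pencil.

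The main obstacle I expect is the balancing act played by $\Delta$. Being klt forces every coefficient of $\Delta$ to be strictly less than $1$, while nefness of $D-(K_X+\Delta)$ typically forces those coefficients to be as close to $1$ as possible, since any extra positivity in $D-(K_X+\Delta)$ would push the situation into the range where the usual Kawamata-Viehweg vanishing does apply and kills the cohomology of interest. Thus the construction must be tuned so that $\Delta$ sits exactly at the threshold imposed by the Frobenius-destabilizing input from Step 1; equivalently, the blow-up centres of $X\to \mathbb{P}^2_k$ must lie in very special position with respect to the auxiliary pencil. Once this delicate positioning is achieved, the klt condition and the nef-and-bigness of $D-(K_X+\Delta)$ follow from routine intersection calculations on the dual graph of exceptional curves, and the $H^1$ nonvanishing is inherited from the Tango-Raynaud input; but setting up the correct geometry is, in my view, the entire content of the proof.
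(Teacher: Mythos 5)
Your proposal is a strategy sketch rather than a proof, and the strategy it sketches is not the one that actually works here; more importantly, every step that would constitute the mathematical content is left unspecified. You never say which points of $\mathbb P^2_k$ to blow up, what $D$ and $\Delta$ are, or how the nonvanishing of $H^1$ is actually certified. You acknowledge this yourself ("setting up the correct geometry is \ldots the entire content of the proof"), but that content is exactly what is missing, so the proposal cannot be assessed as correct.

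For comparison, the paper's construction is elementary and combinatorial, with no Tango--Raynaud pencil, no fibration over $\mathbb P^1$, and no Leray spectral sequence. One takes $f\colon X\to\mathbb P^2_k$ to be the blowup of \emph{all} $q^2+q+1$ points of $\mathbb P^2(\F_q)$ (for $\F_q\subset k$), with exceptional curves $E_i$ and proper transforms $L'_j$ of the $\F_q$-lines; the $L'_j$ are pairwise disjoint with $(L'_j)^2=-q$. One sets $\Delta=\frac{q}{q+1}\sum_j L'_j$ (klt is immediate since the $L'_j$ are disjoint and the coefficient is $<1$) and $B=(q^2+1)f^*H-q\sum_i E_i$, $D=K_X+B$. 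The incidence relation $(q^2+q+1)f^*H\sim\sum_j L'_j+(q+1)\sum_i E_i$ gives $B-\Delta\sim_{\Q}\frac{1}{q+1}f^*H$, which is nef and big. Finally --- and this is the step your mechanism misses entirely --- the nonvanishing is forced by Riemann--Roch: $\chi(X,\MO_X(K_X+B))=\frac{1}{2}(q-q^2)<0$, while $h^0\ge 0$ and $h^2(X,K_X+B)=h^0(X,-B)=0$, so $h^1\ge\frac{1}{2}(q^2-q)>0$. No Frobenius anomaly or higher direct image is needed; the failure of vanishing is read off from a negative Euler characteristic. I would also caution against your heuristic that ``extra positivity of $D-(K_X+\Delta)$ would push the situation into the range where Kawamata--Viehweg applies'': in positive characteristic there is no such range in general, and the relevant tension in the actual example is simply that $B-\Delta$ is big and nef but degenerate on the exceptional locus. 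If you want to salvage your write-up, you need to commit to a specific configuration and divisors and then either reproduce a Riemann--Roch computation of this kind or supply a genuinely worked-out cohomological argument; as it stands, none of (1), (2), (3) is proved.
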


To prove Theorem~\ref{intro-kvv}, 
we  use some surfaces constructed by Langer \cite{Lan}. 
If $k=\F_p$, then $X$ can be obtained 
by taking the blowup of $\mathbb P^2_{\F_p}$ along all the $\F_p$-rational points. 
%Second we take the base change $X:=X \times_{\F_p} k$. 
Since the proper transforms $L'_1, \dots, L'_{p^2+p+1}$ of the $\F_p$-lines $L_1, \dots, L_{p^2+p+1}$ 
are pairwise disjoint, 
we can contract all  these curves and obtain a birational morphism $g\colon X \to Y$ onto a klt surface $Y$ such that  $\rho(Y)=1$ (cf. Lemma~\ref{l-Y-sing}). 
Note that  $-K_Y$ is ample if and only if $p=2$ (cf. Lemma~\ref{l-Y-sing}). 
Further, we show:

\begin{itemize}
\item For any $p>0$, $Y$ is obtained as a purely inseparable cover of $\mathbb P^2$
%$Y \to \mathbb P^2$
 (cf. Theorem~\ref{t-pi}). 
If $p=2$, then the morphism $Y \to \mathbb P^2$ is induced by the anti-canonical linear system $|-K_Y|$ 
(cf. Remark~\ref{r-anti-map}).  
\item If $p=2$, then the Kleimann--Mori cone $NE(X)$ is generated by exactly 14 curves 
(cf. Theorem~\ref{t-KM-cone}). 
\item If $p=2$, then $X$ is isomorphic to a surface constructed by Keel--M\textsuperscript{c}Kernan 
(cf. Proposition~\ref{p-km-surfaces}). 
\end{itemize}

\medskip

\textbf{Related results:}
After Raynaud constructed  the first counter-example 
to  Kodaira vanishing  in positive characteristic \cite{Ray78}, 
several other people studied this problem 
(e.g. see \cite{DI87},  \cite{Eke88}, \cite{She91}, \cite[Section~II. 6]{Kol96}, \cite{Muk13}, \cite{DF15} ). 
In particular,  Fano varieties are known to violate  Kawamata--Viehweg vanishing. 
As far as the authors know, the examples constructed by Lauritzen--Rao \cite{LR97} (of dimension at least $6$) are the only one 
over an algebraically closed field. 
If we admit imperfect fields, 
then Schr\"{o}er and Maddock constructed  log del Pezzo surfaces with $H^1(X, \MO_X) \neq 0$ 
\cite{Sch07,Mad16}. 
In \cite{CTW}, the authors and Witaszek show that Kawamata--Vieweg vanishing holds  
for klt del Pezzo surfaces in large characteristic. 
On the other hand, if $p=2$, then the surface mentioned above is a smooth weak del Pezzo surface (cf. Lemma~\ref{l-Y-sing}), 
hence our result cannot be extended to characteristic two (see also Proposition~\ref{p-A}).

\medskip

\textbf{Acknowledgement:} 
We would like to thank I. Cheltsov, K. Fujita, A. Langer and J. Witaszek for many useful discussions and comments. 
We would also like to thank the referee for reading our manuscript carefully and for suggesting several improvements.

\section{Preliminaries}
 
\subsection{Notation}

%If not stated otherwise, we work over an algebraically closed field $k$ of characteristic $p>0$. 
%From Section..., we assume that $p=2$. 
We say that $X$ is a {\em variety} over a field $k$, 
if $X$ is an integral  scheme which is separated and of finite type over $k$. 
A {\em curve} (resp. {\em surface}) 
is a variety of dimension one (resp. two). 
We say that two schemes $X$ and $Y$ over a field $k$ are $k$-{\em isomorphic} 
if there exists an isomorphism $\theta\colon X \to Y$ of schemes 
such that both $\theta$ and $\theta^{-1}$ commutes with the structure morphisms: 
$X \to \Spec\,k$ and $Y \to \Spec\,k$. 
Given a proper morphism $f\colon X\to Y$ between normal varieties, we say that  two 
$\mathbb Q$-Cartier $\mathbb Q$-divisors 
$D_1,D_2$ on $X$ are {\em numerically equivalent over} $Y$, denoted $D_1\equiv_f D_2$, if their difference is numerically trivial on any fibre of $f$.  

We refer to \cite[Section 2.3]{KM98} or \cite[Definition~2.8]{Kol13} for the classical definitions of singularities 
(e.g. {\em klt}) appearing in the minimal model programme. 
%A {\em log pair} $(X,\Delta)$ consists of a normal variety $X$ and a $\mathbb Q$-divisor $\Delta\ge 0$ such that $K_X+\Delta$ is $\mathbb Q$-Cartier. 
Note  that we always assume that 
for any klt %(resp.\ plt, log canonical) 
pair $(X, \Delta)$, 
the $\Q$-divisor $\Delta$ is effective. 
% is  a log pair, and in particular $\Delta$ is an effective $\mathbb Q$-divisor. 
%We say that a log pair $(X,\Delta)$ is {\em log smooth} if $X$ is smooth and the support of $\Delta$ is a divisor with simple normal crossing. 
%Given a log pair $(X,B)$ and a divisorial valuation $E$ over $X$, we denote by $a(E,X,B)$ the {\em log discrepancy} of $(X,B)$ with respect to $E$.
%We define $\mathrm{mld}(X,B)$ to be the {\em minimal log discrepancy} of $(X,B)$, that is the minimum $a(E,X,B)$ among all the valuations $E$ which are exceptional over $X$. 
%A two dimensional projective log pair $(X, \Delta)$ is {\em log del Pezzo} if 
%$(X, \Delta)$ is klt and $-(K_X+\Delta)$ is ample. 
%Given a (possibly non-effective) $\mathbb Q$-divisor $B$ on a normal variety $X$ such that $K_X+B$ is $\mathbb Q$-Cartier, we say that the pair $(X,B)$ is \emph{sub-terminal} if $a(E,X,B)>1$ for any divsorial valuation $E$ over $X$. 
%For the definitions of $F$-singularities, 
%we refer to \cite[Definition 3.1]{schwedesmith10} and \cite[Definition 1.6]{CTW15a}. 

\subsection{Construction by Langer}\label{s-construct}

We now recall the construction of a rational surface due to Langer \cite{Lan} (see also \cite[Exercise III.10.7]{Har77}). 
A similar method was used to construct  also some  K3 surfaces and  Calabi--Yau threefolds (cf. \cite{Hir99}, \cite{DK03}). 
%and collect some basic properties of them. 
 
\begin{nota}\label{n-q}
\begin{enumerate}
\item 
Let $q:=p^e$, where $p$ is a prime number and $e$ is a positive integer. 
Let $P_1^{(0)}, \dots, P_{q^2+q+1}^{(0)}$ be the $\F_q$-rational points on $\mathbb P^2_{\F_q}$, 
and let $L_1^{(0)}, \dots, L_{q^2+q+1}^{(0)}$ be the $\F_q$-lines on $\mathbb P^2_{\F_q}$, 
i.e. the lines which are defined over $\F_q$. 
Let 
$$f^{(0)}\colon X^{(0)} \to \mathbb P^2_{\F_q}$$
be the blowup along all the $\F_q$-points $P_1^{(0)}, \dots, P_{q^2+q+1}^{(0)}$. 
For any $i=1, \dots, q^2+q+1$, let $E^{(0)}_i$ be the $f^{(0)}$-exceptional prime divisor 
lying over $P^{(0)}_i$, hence $E^{(0)}_i \simeq \mathbb P^1_{\F_q}$. 
The proper transforms $L'^{(0)}_1, \dots, L'^{(0)}_{q^2+q+1}$ of the $\F_q$-lines are disjoint each other 
and satisfy $(L'^{(0)}_i)^2=-q$ for any $i=1, \dots, q^2+q+1$. 
Let 
$$g^{(0)}\colon X^{(0)} \to Y^{(0)}$$
be the birational morphism contracting all of the curves 
$$L'^{(0)}_1, \dots, L'^{(0)}_{q^2+q+1}.$$ 
We define $(E_i^Y)^{(0)}:=g^{(0)}_*E_i^{(0)}$. 
%We fix a homogeneous coordinate $[x:y:z]$ of $\mathbb P^2_{\F_q}$. 
\item 
Let $k$ be a field containing $\F_q$ and let  
$$f\colon X \to \mathbb P^2_k, \quad g\colon X \to Y$$
be the base changes of $f^{(0)}$ and $g^{(0)}$ induced by $(-) \times_{\F_q} k$. 
We denote by $P_i$, $L_i$, $E_i$, $L'_i$ and $E^Y_i$ 
 the inverse images of $P^{(0)}_i$, $L^{(0)}_i$, $E^{(0)}_i$, $L'^{(0)}_i$ and $(E^Y_i)^{(0)}$ , respectively. 
We fix an arbitrary line $H \in |\MO_{\mathbb P^2}(1)|$ defined over $k$. 
By abuse of notation, 
each $P_i$ (resp. $L_j$) is also called an $\F_q$-point (resp. an $\F_q$-line), 
although these depend on the choice of the homogeneous coordinates. 
\end{enumerate}
%Let 
%$$g:X \to Y$$
%be the birational contraction of the curves $L'^{(0)}_1, \dots, L'^{(0)}_{q^2+q+1}$. 
%We set $E^Y_i:=E_i$. 
%We say $X$, constructed above, is the {\em Langer surface over} $\F_q$. 
\end{nota}

\begin{nota}\label{n-2}
We use the  same notation as in Notation~\ref{n-q} but we assume that $q=2$, i.e. $p=2$ and $e=1$. 
%We call $P_1, \dots, P_7$ $\F_2$-points. 
\end{nota}

\begin{rem}\label{r-configuration}
The configuration of the $\F_q$-points and the $\F_q$-lines on $\mathbb P^2_{\F_q}$ satisfy 
the following properties:
\begin{itemize}
\item For any $\F_q$-line $L$ on $\mathbb P^2_{\F_q}$, 
the number of the $\F_q$-points contained in $L$ is equal to $q+1$. 
\item For any $\F_q$-point $P$ on $\mathbb P^2_{\F_q}$, 
the number of the $\F_q$-lines passing through $P$ is equal to $q+1$. 
\end{itemize}
If $q=2$, then the picture of the configuration is classically known as Fano plane 
(e.g. see \cite[Subsection~3.1.1]{Pol98}). 
\end{rem}

\subsection{Basic properties}

We now summarise some basic properties of the surfaces $X$ and $Y$ constructed in Notation~\ref{n-q}. 

\begin{lem}\label{l-Y-sing}
We use Notation~\ref{n-q}. 
The following  hold: 
\begin{enumerate}
\item $\rho(Y)=1$. 
\item $Y$ is klt. 
\item $Y$ has at most canonical singularities if and only if $q=2$. 
\item If $q>2$, then $K_Y$ is ample. 
\item If $q=2$, then $-K_Y$ is ample. 
\item If $q=2$, then $-K_X$ is nef and big. 
\end{enumerate}
\end{lem}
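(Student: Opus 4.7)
The plan is to reduce everything to intersection-theoretic calculations on $X$, using the standard formulas $K_X = -3f^*H + \sum_{i=1}^{q^2+q+1} E_i$ and $L'_i \sim f^*H - \sum_{P_j \in L_i} E_j$, which give $(L'_i)^2 = 1 - (q+1) = -q$ and $L'_j \cdot E_i = 1$ or $0$ according as $P_i \in L_j$ or not. For part (1), the $L'_i$ are pairwise disjoint with $(L'_i)^2 = -q$, so their intersection matrix $-qI$ is negative definite, their classes are linearly independent in $\Pic(X)_{\Q}$, and $\rho(Y) = \rho(X) - (q^2+q+1) = 1$.

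For parts (2) and (3), each $L'_i \cong \mathbb P^1$ is a smooth rational $(-q)$-curve, and the $L'_i$ are pairwise disjoint, so each contraction can be analysed independently. Writing $K_X = g^*K_Y + \sum_j a_j L'_j$ and intersecting with $L'_i$, adjunction yields $K_X \cdot L'_i = -2 - (L'_i)^2 = q - 2 = -q\, a_i$, whence $a_i = (2-q)/q$. This discrepancy exceeds $-1$ for every $q$, and is non-negative iff $q \leq 2$, giving klt in general and canonical precisely when $q = 2$.

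For parts (4) and (5), since $\rho(Y) = 1$ the sign of $K_Y$ on any curve on $Y$ controls its (anti-)ampleness. Applying $g^*K_Y = K_X + \tfrac{q-2}{q}\sum_j L'_j$ to the non-contracted curve $E_1$, and using Remark~\ref{r-configuration} (exactly $q+1$ of the $\F_q$-lines pass through $P_1$), yields
\[
g^*K_Y \cdot E_1 = -1 + \frac{(q-2)(q+1)}{q} = \frac{q^2 - 2q - 2}{q},
\]
which is negative for $q=2$ and positive for $q \geq 3$, establishing (5) and (4) respectively. For (6), $q=2$ forces $a_i = 0$, so $g^*K_Y = K_X$ and $-K_X = g^*(-K_Y)$ is the pullback of the ample divisor $-K_Y$ under the birational morphism $g$, hence nef and big.

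The computations are essentially routine; the only place where care is needed is the bookkeeping of discrepancies and intersection numbers, and the main input beyond standard surface theory is the combinatorial identity $\sum_j L'_j \cdot E_i = q+1$, which comes directly from the Fano-plane-type configuration in Remark~\ref{r-configuration}.
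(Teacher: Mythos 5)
Your proposal is correct and follows essentially the same route as the paper: both hinge on the discrepancy computation $g^*K_Y=K_X+\frac{q-2}{q}\sum_j L'_j$ for (2), (3) and (6), and both reduce (4) and (5) to the sign of $q^2-2q-2$ together with $\rho(Y)=1$ (the paper via the linear equivalence $(q^2+q+1)K_Y\sim(q^2-2q-2)\sum_i E^Y_i$, you via the numerically equivalent computation of $g^*K_Y\cdot E_1$). The minor variations — deriving the discrepancy by adjunction on the $(-q)$-curves $L'_i$ rather than stating the pullback formula outright — do not change the substance of the argument.
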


\begin{proof} (1) follows immediately by the construction. 

Further, we have 
$$g^*K_Y=K_X+\left (1-\frac 2 q\right )  \sum_{i=1}^{q^2+q+1} L'_i.$$
Thus, (2) and (3) hold. 

We now show (4) and (5). 
Since $K_X=f^*K_{\mathbb P^2}+\sum_i E_i\sim -3f^*H+\sum_i E_i$ and 
$$(q^2+q+1)f^*H \sim f^*(\sum_{i=1}^{q^2+q+1}L_i)=\sum_{i=1}^{q^2+q+1}L'_i+(q+1)\sum_{i=1}^{q^2+q+1}E_i,$$
we have that 
\begin{eqnarray*}
(q^2+q+1)K_X&\sim& -3(q^2+q+1)f^*H+(q^2+q+1)\sum_{i=1}^{q^2+q+1} E_i\\
&\sim& -3\sum_{i=1}^{q^2+q+1}L'_i+(q^2-2q-2)\sum_{i=1}^{q^2+q+1} E_i.
\end{eqnarray*}
Taking the push-forward $g_*$, we get 
$$(q^2+q+1)K_Y \sim (q^2-2q-2)\sum_{i=1}^{q^2+q+1} E^Y_i.$$
Therefore, if $q=2$ (resp. $q >2$), then $-K_Y$ (resp. $K_Y$) is ample. 
Thus, (4) and (5) hold. 
 (6)  follows directly from (3) and (5). 
\end{proof}

\begin{lem}\label{l-lines-cover}
We use Notation~\ref{n-q}. We assume that $k=\F_q$. 
For any $\F_q$-point $P_i \in \mathbb P_{\F_q}^2(\F_q)$, 
let $L_{j_1}, \dots, L_{j_{q+1}}$ be the $\F_q$-lines passing through $P_i$. 
Then $\mathbb P_{\F_q}^2(\F_q)=L_{j_1}(\F_q) \cup \dots \cup L_{j_{q+1}}(\F_q)$. 
\end{lem}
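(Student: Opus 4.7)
The plan is to verify the claim by a simple counting argument, comparing the cardinality of the union on the right-hand side with $|\mathbb P^2_{\F_q}(\F_q)|=q^2+q+1$.

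First I would invoke the properties recalled in Remark~\ref{r-configuration}: each $\F_q$-line $L$ on $\mathbb P^2_{\F_q}$ is $\F_q$-isomorphic to $\mathbb P^1_{\F_q}$ and so satisfies $|L(\F_q)|=q+1$, and through every $\F_q$-point pass exactly $q+1$ $\F_q$-lines. Next I would observe that for any two distinct $\F_q$-lines $L_{j_a}$ and $L_{j_b}$ both passing through $P_i$, we have $L_{j_a}\cap L_{j_b}=\{P_i\}$ as schemes, and in particular $L_{j_a}(\F_q)\cap L_{j_b}(\F_q)=\{P_i\}$.

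Then I would compute, by inclusion–exclusion (or equivalently by partitioning each line into $P_i$ and the $q$ remaining $\F_q$-points),
\[
|L_{j_1}(\F_q)\cup\dots\cup L_{j_{q+1}}(\F_q)|
=1+(q+1)\cdot q=q^2+q+1.
\]
Since this union is contained in $\mathbb P^2_{\F_q}(\F_q)$, which itself has cardinality $q^2+q+1$, the inclusion must be an equality. There is no real obstacle here; the only point to be careful about is to check that the $q+1$ lines through $P_i$ are pairwise transverse at $P_i$ with no other common $\F_q$-point, which is just the standard fact that two distinct lines in $\mathbb P^2$ meet in a single point.
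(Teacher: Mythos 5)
Your proposal is correct and is essentially the same argument as the paper's: both note that the $q+1$ lines through $P_i$ pairwise meet only at $P_i$, count $1+(q+1)q=q^2+q+1$ rational points in the union, and conclude by comparing with $\#\mathbb P^2_{\F_q}(\F_q)$.
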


\begin{proof}
Since we have $L_{j_{\alpha}} \cap L_{j_{\beta}}=P_i$ for any $1 \leq \alpha < \beta \leq q+1$, 
the claim follows by counting the number of $\F_q$-rational points (cf. Remark~\ref{r-configuration}): 
$$\#(L_{j_1}\cup \dots \cup L_{j_{q+1}})(\F_q)=q (q+1)+1=q^2+q+1=\mathbb P^2_{\F_q}(\F_q).$$ 
\end{proof}

%\begin{lem}\label{l-ideal}
%We use Notation~\ref{n-q} and assume $k=\F_q$. 
%Let $R \subset \mathbb P^2_{\F_q}$ be 
%the reduced closed subscheme of $\mathbb P^2_{\F_q}$ which is set-theoretically equal to 
%the set of all the $\F_q$-rational points. 
%Let $I$ be the homogeneous ideal of $\F_q[x, y, z]$ corresponding to $R$. 
%Then the following assertions hold. 
%\begin{enumerate}
%\item $I=(x^qy-xy^q, y^qz-yz^q, z^qx-zx^q).$
%\item Let $f \in I$ be a homogeneous polynomial of degree $q+1$. 
%Then there exists an $\F_q$-rational point $P$ on $\mathbb P^2_{\F_q}$ 
%such that the closed subscheme $\Proj\,\F_q[x, y, z]/(f)$ is the union of the $q+1$ $\F_q$-lines 
%passing through $P$. 
%In particular, the scheme is not irreducible. 
%\end{enumerate}
%\end{lem}
%\begin{proof}
%The assertion (1) follows from \cite[Proposition~2.1]{HK13}. 
%The assertion (2) holds by \cite[Proposition~2.3]{HK13}. 
%\end{proof}

\section{Counter-examples to  Kawamata--Viehweg vanishing}\label{s-kvv}

%\begin{lem}
%We use the same notation as in Section~\ref{s-construct}. 
%Then the following $\Q$-linear equivalence holds: 
%$$(q^2+1)f^*\MO_{\mathbb P^2}(1)-q\sum_{i=1}^{q^2+q+1} E_i \sim_{\Q} \frac{1}{q+1}f^*\MO_{\mathbb P^2}%(1)+\frac{q}{q+1}\sum_{i=1}^{q^2+q+1}L'_i.$$
%\end{lem}
%\begin{proof}
%The assertion is equivalent to 
%$$(q^2+q+1)f^*\MO_{\mathbb P^2}(1)-(q+1)\sum_{i=1}^{q^2+q+1} E_i \sim_{\Q} \sum_{i=1}^{q^2+q+1}L_i,$$
%which follows from 
%$$(q^2+q+1)f^*\MO_{\mathbb P^2}(1) \sim f^*(\sum_{i=1}^{q^2+q+1} L_i)
%=\sum_{i=1}^{q^2+q+1} L_i+(q+1)\sum_{i=1}^{q^2+q+1} E_i.$$
%\end{proof}

In this Section, we construct some counter-examples to  Kawamata--Viehweg vanishing  on a family of  
 smooth rational surfaces:
% (cf. Theorem~\ref{t-cex-kvv}). 
%As a consequence, we show the existence of big divisors on these surfaces whose associated linear system is empty 
%(cf. Proposition~\ref{p-non-effective})\footnote{I have a stupid question: over $\mathbb C$ is it true that any big divisor over a rational surface are effective? It would be true if $g_*B$ is Cartier}. 

\begin{thm}\label{t-cex-kvv}
We use Notation~\ref{n-q}. 
We consider the following $\mathbb Q$-divisors on $X$:  
\begin{itemize}
\item $\Delta:=\frac{q}{q+1}\sum_{i=1}^{q^2+q+1}L'_i$, and 
\item $B:=(q^2+1)f^*H-q\sum_{i=1}^{q^2+q+1} E_i.$
\end{itemize}
Then the following  hold:
\begin{enumerate}
\item $(X, \Delta)$ is klt. 
\item $B-\Delta$ is nef and big. 
\item $h^1(X, \MO_X(K_X+B)) \geq \frac{1}{2}(q^2-q)$. 
%In particular, $H^1(X, K_X+B) \neq 0.$ 
\end{enumerate}
In particular, Kawamata-Viehweg vanishing fails on $X$. 
\end{thm}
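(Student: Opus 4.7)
The plan is to verify the three items in turn, with (3) being the main content. For (1), note that by Notation~\ref{n-q} the curves $L'_1,\dots, L'_{q^2+q+1}$ are smooth rational and pairwise disjoint on the smooth surface $X$, so $\Delta$ has simple normal crossings support with coefficients $\frac{q}{q+1}<1$; this is enough for $(X,\Delta)$ to be klt.

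For (2), I compute $B-\Delta$ using linear equivalences. Since each $\F_q$-line meets exactly $q+1$ of the $\F_q$-points (Remark~\ref{r-configuration}), we have
\[
\sum_{i=1}^{q^2+q+1} L'_i \;\sim\; (q^2+q+1)\,f^*H \;-\; (q+1)\sum_{j=1}^{q^2+q+1} E_j.
\]
Substituting this into $\Delta$ and simplifying is expected to yield $B-\Delta \sim_{\Q} \tfrac{1}{q+1}\,f^*H$, which is a positive rational multiple of the pullback of an ample divisor via a birational morphism, hence nef and big.

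For (3), since $K_X = -3f^*H + \sum E_i$, the adjoint divisor is
\[
K_X+B \;=\; (q^2-2)\,f^*H \;-\; (q-1)\sum_{i=1}^{q^2+q+1} E_i.
\]
The plan is to push this forward to $\mathbb P^2$. Near each $P_i$ the morphism $f$ is simply the blowup of a smooth point on a smooth surface, so $f_*\MO_X(-(q-1)E_i)=\mathfrak m_{P_i}^{q-1}$. As the $P_i$ are distinct, these combine via the projection formula to give
\[
f_*\MO_X(K_X+B) \;=\; \mathcal I_Z(q^2-2),
\]
where $\mathcal I_Z = \prod_i \mathfrak m_{P_i}^{q-1}$ is the ideal sheaf of the $0$-dimensional subscheme $Z$ whose support consists of the $q^2+q+1$ fat points of order $q-1$. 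The five-term exact sequence of the Leray spectral sequence then produces an injection $H^1(\mathbb P^2, \mathcal I_Z(q^2-2)) \hookrightarrow H^1(X, \MO_X(K_X+B))$, reducing the question to a cohomology computation on $\mathbb P^2$.

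Finally, I would use the short exact sequence
\[
0 \;\longrightarrow\; \mathcal I_Z(q^2-2) \;\longrightarrow\; \MO_{\mathbb P^2}(q^2-2) \;\longrightarrow\; \MO_Z \;\longrightarrow\; 0
\]
together with the vanishing $H^1(\mathbb P^2,\MO_{\mathbb P^2}(q^2-2))=0$ (valid since $q\ge 2$) to deduce
\[
h^1(\mathbb P^2,\mathcal I_Z(q^2-2)) \;\geq\; h^0(\MO_Z) - h^0(\MO_{\mathbb P^2}(q^2-2)).
\]
The direct counts $h^0(\MO_Z) = (q^2+q+1)\cdot\tfrac{q(q-1)}{2} = \tfrac{q^4-q}{2}$ and $h^0(\MO_{\mathbb P^2}(q^2-2)) = \binom{q^2}{2}=\tfrac{q^4-q^2}{2}$ then give the desired bound $\tfrac{q^2-q}{2}$. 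The point requiring the most care will be the identification $f_*\MO_X(-(q-1)\sum E_i)=\prod \mathfrak m_{P_i}^{q-1}$, which relies on the standard local analysis of blowups of smooth points on a smooth surface.
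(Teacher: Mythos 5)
Your proposal is correct, and parts (1) and (2) coincide with the paper's argument (the key linear equivalence $\sum L'_i \sim (q^2+q+1)f^*H-(q+1)\sum E_j$ and the resulting identity $B-\Delta\sim_{\Q}\tfrac{1}{q+1}f^*H$ are exactly what the paper uses). For part (3), however, you take a genuinely different route. The paper works entirely on $X$: it computes $\chi(X,\MO_X(K_X+B))=1+\tfrac12(B^2+B\cdot K_X)=\tfrac12(q-q^2)<0$ by Riemann--Roch and concludes $h^1\ge -\chi$. You instead push $K_X+B=(q^2-2)f^*H-(q-1)\sum E_i$ down to $\mathbb P^2$, identify $f_*\MO_X(K_X+B)$ with $\mathcal I_Z(q^2-2)$ for the scheme $Z$ of fat points of multiplicity $q-1$ at the $q^2+q+1$ points $P_i$, inject $H^1(\mathbb P^2,\mathcal I_Z(q^2-2))$ into $H^1(X,\MO_X(K_X+B))$ via Leray, and bound the former by the deficiency $h^0(\MO_Z)-h^0(\MO_{\mathbb P^2}(q^2-2))=\tfrac{q^4-q}{2}-\tfrac{q^4-q^2}{2}=\tfrac{q^2-q}{2}$. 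All the ingredients (the identity $f_*\MO_X(-mE_i)=\mathfrak m_{P_i}^{m}$ for a point blowup, the edge map of the Leray spectral sequence, the length count $\dim\MO/\mathfrak m^{q-1}=\tfrac{q(q-1)}{2}$) are standard and correctly applied, and the two computations are numerically equivalent since $\chi(\mathcal I_Z(q^2-2))=\chi(\MO_{\mathbb P^2}(q^2-2))-\operatorname{length}(Z)$ recovers the paper's Euler characteristic. What your version buys is a more geometric reading of the non-vanishing: the $\F_q$-rational fat points fail to impose independent conditions on curves of degree $q^2-2$; what the paper's version buys is brevity, since it avoids any discussion of pushforwards and works with intersection numbers only. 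One cosmetic remark: the vanishing $H^1(\mathbb P^2,\MO_{\mathbb P^2}(d))=0$ holds for every $d$, so no hypothesis on $q$ is needed there.
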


\begin{proof}
Since $L'_1, \dots, L'_{q^2+q+1}$ are pairwise disjoint, (1) follows immediately. 
We now show (2). 
We have:
$$(q^2+q+1)f^*H \sim f^*(\sum_{i=1}^{q^2+q+1} L_i)
=\sum_{i=1}^{q^2+q+1} L'_i+(q+1)\sum_{i=1}^{q^2+q+1} E_i.$$
It follows that 
$$B=(q^2+1)f^*H-q\sum_{i=1}^{q^2+q+1} E_i \sim_{\Q} \frac{1}{q+1}f^*H+\frac{q}{q+1}\sum_{i=1}^{q^2+q+1}L'_i.$$
Thus, (2) holds. 

We now show (3). 
By  Riemann--Roch, it follows that  
$$\chi(X, \MO_X(K_X+B))=1+\frac{1}{2}(B^2+B \cdot K_X).$$
%where we get $h^2(X, K_X+B)=h^0(X, -B)=0$ since $B$ is big. 
Since 
$$B^2=((q^2+1)f^*H-q\sum_{i=1}^{q^2+q+1} E_i)^2=(q^2+1)^2-q^2(q^2+q+1)=-q^3+q^2+1$$
and 
$$B \cdot K_X=((q^2+1)f^*H-q\sum_{i=1}^{q^2+q+1} E_i) \cdot (-3f^*H+\sum_{i=1}^{q^2+q+1} E_i)$$
$$=-3(q^2+1)+q(q^2+q+1)=q^3-2q^2+q-3,$$
we have that 
$$\chi(X, K_X+B)=1+\frac{1}{2}((-q^3+q^2+1)+(q^3-2q^2+q-3))=\frac{1}{2}(-q^2+q).$$
%where the last inequality holds by $q \geq 2$.  
Thus, (3) holds. 
\end{proof}

\begin{rem} We do not know whether 
there exist a klt del Pezzo surface $X$ and a nef and big Cartier divisor $A$ on $X$ such that $$H^1(X, \MO_X(A)) \neq 0.$$ 
\end{rem}

As an application, we now show that the pair $(X, \sum E_i+\sum L'_j)$ is not liftable to $W_2(k)$. Note that, a similar result was proven in \cite[Proposition 8.4]{Lan}.

\begin{cor}
We use Notation~\ref{n-q}. Assume that $k$ is perfect. 
If $p \geq 3$, then 
$$(X, \sum_{i=1}^{q^2+q+1} E_i+\sum_{j=1}^{q^2+q+1} L'_j)$$ 
is not liftable to $W_2(k)$. 
%\footnote{I think that we only need $L'_j$. Hara's original proof may be valid. }
\end{cor}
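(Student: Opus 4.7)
The plan is to derive a contradiction from a logarithmic version of Kawamata--Viehweg vanishing that is known to hold for smooth SNC pairs $(X,D)$ which lift to $W_2(k)$, provided $\dim X < p$. Specifically, I would invoke the following consequence of Deligne--Illusie's Hodge-to-de Rham degeneration theorem, in the form proved by Hara (with related statements in Illusie's survey and Esnault--Viehweg): if $(X,D)$ is a smooth projective pair over a perfect field $k$ of characteristic $p$, $D$ is a reduced simple normal crossing divisor, $(X,D)$ lifts to $W_2(k)$, and $\dim X < p$, then for any klt pair $(X,\Delta)$ with $\Supp(\Delta) \subseteq \Supp(D)$ and any Cartier divisor $L$ with $L-(K_X+\Delta)$ nef and big, one has $H^i(X,\MO_X(L)) = 0$ for all $i > 0$.

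With this input, the argument is a direct application of the counter-example in Theorem~\ref{t-cex-kvv}. Assuming for contradiction that $(X, \sum_{i}E_i + \sum_{j} L'_j)$ lifts to $W_2(k)$, I note that the boundary is reduced and SNC on the smooth surface $X$, and that $p\geq 3$ gives $\dim X = 2 < p$. I would then apply the vanishing theorem above with $\Delta := \frac{q}{q+1}\sum_{j} L'_j$ and $L := K_X + B$ taken from Theorem~\ref{t-cex-kvv}. All hypotheses are transparent: $(X,\Delta)$ is klt by Theorem~\ref{t-cex-kvv}(1), the inclusion $\Supp(\Delta) \subseteq \Supp(\sum_{i} E_i + \sum_{j} L'_j)$ holds by construction, and $L-(K_X+\Delta) = B-\Delta$ is nef and big by Theorem~\ref{t-cex-kvv}(2). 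One would conclude $H^1(X,\MO_X(K_X+B)) = 0$, contradicting Theorem~\ref{t-cex-kvv}(3), which gives $h^1 \geq \frac{1}{2}(q^2-q) > 0$ whenever $q \geq p \geq 3$.

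The main obstacle, in my view, is less the combinatorial check than the careful invocation of the correct logarithmic Kawamata--Viehweg vanishing statement for $W_2$-liftable SNC pairs, together with the verification of its precise hypotheses. In our setting these reduce to: reduced SNC support (built into the boundary), the coefficient bound $\Delta \leq D = \sum_{i} E_i + \sum_{j} L'_j$ (automatic since each coefficient $\frac{q}{q+1}$ is strictly less than $1$), and the dimensional inequality $\dim X < p$ --- which is precisely why the assumption $p\geq 3$ appears in the statement and cannot be removed by this method.
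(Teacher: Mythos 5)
Your proposal follows essentially the same route as the paper: both derive a contradiction between Theorem~\ref{t-cex-kvv} and the logarithmic Kawamata--Viehweg vanishing theorem of Hara for $W_2(k)$-liftable SNC pairs in dimension $<p$. The one refinement you should make is that the cited result (Hara, Corollary~3.8) is stated for \emph{ample} divisors, so rather than asserting a nef-and-big version of the lifting theorem you should perturb as the paper does: since $B-\Delta\sim_{\Q}\frac{1}{q+1}f^*H$, the class $B-\Delta-\sum\epsilon_i E_i$ is ample for small $\epsilon_i>0$ and the fractional boundary $\Delta+\sum\epsilon_i E_i$ is still supported on $\sum E_i+\sum L'_j$ with coefficients in $(0,1)$, after which the argument runs exactly as you describe.
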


\begin{proof}
We use the same notation as in Theorem~\ref{t-cex-kvv}. 
As in the proof of Theorem~\ref{t-cex-kvv}, 
it follows that $B-\Delta-\sum \epsilon_iE_i$ is ample for some $\epsilon_i>0$. 
Thus, Theorem~\ref{t-cex-kvv} and \cite[Corollary~3.8]{Har98} imply the claim. 
\end{proof}

\section{Purely inseparable morphisms to $\mathbb P^2$}\label{s-pi}

The main purpose of this Section is to show that the  surface $Y$, as in Notation~\ref{n-q},
can be obtained as a purely inseparable cover of $\mathbb P^2$ (cf. Theorem~\ref{t-pi}). 
Moreover if $q=2$, then the morphism $Y \to \mathbb P^2$ is induced 
by the anti-canonical linear system (cf. Remark~\ref{r-anti-map}). 

We also show that the complete linear system $|M|$, appearing in Theorem~\ref{t-pi}, 
does not have any smooth element (cf. Proposition~\ref{p-bertini}), even though it is base point free and big. We were not able to find a similar example in the literature 
 (cf. \cite[Theorem II.8.18 and Corollary III.10.9]{Har77}).

\begin{thm}\label{t-pi}
We use Notation~\ref{n-q}. 
Let  
$$M:=(q+1)f^*H-\sum_{i=1}^{q^2+q+1} E_i.$$
Then the following  hold:
\begin{enumerate}
\item $|M|$ is base point free. 
\item $M \cdot L'_j=0$ for any $j=1, \dots, q^2+q+1$. 
\item $M^2=q$. 
\item 
Given the natural injective $k$-linear map  
$$\iota\colon H^0(X, \MO_X(M)) \hookrightarrow H^0(\mathbb P^2_k, \MO_{\mathbb P^2_k}(q+1)),$$ 
the following  holds: 
$$\iota(H^0(X, \MO_X(M)))=k\cdot (x^qy-xy^q)+k\cdot (y^qz-yz^q)+k\cdot (z^qx-zx^q).$$
\item There exists a Cartier divisor $M_Y$ on $Y$ such that $M=g^*M_Y$. 
\item The morphism induced by the complete linear system $|M_Y|$ 
$$\varphi:=\Phi_{|M_Y|}\colon Y \to \mathbb P_k^2$$
is a finite universal homeomorphism of degree $q$. 
\end{enumerate}
\end{thm}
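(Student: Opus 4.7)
The plan is to exhibit three explicit sections of $\MO_X(M)$ coming from the polynomials $F_1 = x^qy - xy^q$, $F_2 = y^qz - yz^q$, $F_3 = z^qx - zx^q$, and to show they span $H^0(X, \MO_X(M))$. Parts (2) and (3) reduce to direct intersection calculations: writing $L'_j = f^*L_j - \sum_{P_i \in L_j} E_i$ and using Remark~\ref{r-configuration} (each $\F_q$-line contains exactly $q+1$ of the $\F_q$-points), one obtains $M \cdot L'_j = (q+1) - (q+1) = 0$ and $M^2 = (q+1)^2 - (q^2+q+1) = q$.

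Each $F_i$ factors as the product of the $q+1$ linear forms defining the $\F_q$-lines through one of the three points $Q_1=(0:0:1)$, $Q_2=(1:0:0)$, $Q_3=(0:1:0)$, via the Moore-type identity $\prod_{(a:b)\in\mathbb P^1(\F_q)}(ax+by) = x^qy - xy^q$; by Lemma~\ref{l-lines-cover} these lines cover all $\F_q$-points, so each $F_i$ vanishes at every $P_j$. Using the identification $f_*\MO_X(M) = I \otimes \MO_{\mathbb P^2}(q+1)$ (where $I$ is the ideal sheaf of the $\F_q$-points, arising from $f_*\MO_X(-\sum E_i) = \bigcap_i \mathfrak{m}_{P_i}$), each $F_i$ lifts to a section $s_i \in H^0(X, \MO_X(M))$ with divisor $(s_i) = \sum_{L \ni Q_i} L' + q E_{Q_i}$. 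For (1), these three supports have empty common intersection, since any common point would force either an $\F_q$-line through all three $Q_i$ (impossible: they are not collinear) or $E_{Q_i} \cap E_{Q_j} \ne \emptyset$ for some $i \ne j$ (impossible: the centres are distinct). For (4), given $F \in I_{q+1}$, the restriction $F|_{z=0}$ is a degree-$(q+1)$ binary form vanishing at the $q+1$ points of $\mathbb P^1(\F_q)$, so it is a scalar multiple of $F_1$; writing $F - \lambda F_1 = zG$ with $\deg G = q$ and dehomogenizing at $z=1$, the polynomial $G(x,y,1)$ lies in the radical vanishing ideal $(x^q-x,\, y^q-y)$ of $\F_q^2 \subset \mathbb A^2_k$, whose degree $\le q$ part is the two-dimensional space spanned by $x^q-x$ and $y^q-y$. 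Rehomogenization then gives $zG \in \operatorname{span}(F_2, F_3)$, and hence $F \in \operatorname{span}(F_1, F_2, F_3)$.

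Parts (5) and (6) follow formally. Since $|M|$ is base point free and $M \cdot L'_j = 0$, the morphism $\Phi_{|M|}\colon X \to \mathbb P^2$ contracts each $L'_j$ to a point and factors through $g$ as $\psi \circ g$; setting $M_Y := \psi^*\MO_{\mathbb P^2}(1)$ proves (5) and gives $\varphi = \psi$. Since $\rho(Y)=1$ (Lemma~\ref{l-Y-sing}) and $M_Y$ is nef with $M_Y^2 = q > 0$, $M_Y$ is ample, so $\varphi$ is finite, and the projection formula yields $\deg \varphi = M_Y^2 = q$. For the universal homeomorphism it suffices to show the function field extension is purely inseparable. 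The identity $zF_1 + xF_2 + yF_3 \equiv 0$ (direct expansion) gives $y/z = -F_1/F_3 - (x/z)(F_2/F_3)$ in $k(Y)$, so $k(Y) = \varphi^*k(\mathbb P^2)(x/z)$; setting $\alpha = F_1/F_3$, $\beta = F_2/F_3$ and applying $(a+b)^q = a^q + b^q$ in characteristic $p$ to the relation arising from $\alpha = F_1/F_3$, a short manipulation yields $(x/z)^q \in k(\alpha,\beta)$, and therefore $k(Y)^q \subseteq \varphi^* k(\mathbb P^2)$.

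The main obstacle is the dimension count in (4): Riemann--Roch alone yields $\chi(X, \MO_X(M)) = 1 + \tfrac{1}{2}(-q^2+3q+2)$, which is strictly less than $3$ for $q \ge 3$ (consistent with nonvanishing $h^1$ in that range), so one cannot deduce $h^0 = 3$ by cohomological means alone and the explicit reduction of a general section to $\operatorname{span}(F_1,F_2,F_3)$ via restriction to $\{z=0\}$ is unavoidable.
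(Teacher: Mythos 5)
Your proof is correct, and its overall architecture coincides with the paper's: base point freeness via the explicit members $qE_i+\sum_{L\ni P_i}L'$ of $|M|$, the factorisation through $g$ because $M\cdot L'_j=0$, and pure inseparability read off from the explicit map $[x:y:z]\mapsto[x^qy-xy^q:y^qz-yz^q:z^qx-zx^q]$. You diverge genuinely in two places. For (4) the paper simply cites Tallini (via Homma--Kim) for the identification of the degree-$(q+1)$ forms vanishing on $\mathbb P^2(\F_q)$, whereas you prove it from scratch by restricting to $\{z=0\}$ and then noting that the degree $\le q$ part of the vanishing ideal $(x^q-x,\,y^q-y)$ of $\F_q^2$ is spanned by its two generators; this makes the statement self-contained, and your closing observation that Riemann--Roch only gives $\chi=1+\tfrac12(-q^2+3q+2)<3$ for $q\ge 3$ correctly explains why some such input cannot be avoided. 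For (6) the paper fixes a general target point $(\alpha,\beta)$ and solves for the fibre by successive substitution; you instead use the syzygy $zF_1+xF_2+yF_3=0$ to get $k(Y)=\varphi^*k(\mathbb P^2)(x/z)$ and then show $(x/z)^q\in\varphi^*k(\mathbb P^2)$ by a single Frobenius manipulation --- the same algebra, but packaged as a cleaner statement about the function field extension (you do, as you implicitly note, need the standard fact that a finite morphism of normal integral varieties with purely inseparable function field extension is a universal homeomorphism, which the paper also uses silently). One small expository point: in the empty-intersection argument for (1) your stated dichotomy omits the case of a point lying on $E_{Q_3}$ and on proper transforms of two lines $L_a\ni Q_1$, $L_b\ni Q_2$; since the $L'_j$ are pairwise disjoint this forces $a=b$ and then $Q_3\in L_a$, so it again reduces to a line through all three $Q_i$ and nothing is lost.
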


\begin{proof}
We may assume that $k=\F_q$. 

We first show (1). 
Given a $\F_q$-point $P_i$ on $\mathbb P^2_{\F_q}$, we denote by  $L_{j_1}, \dots, L_{j_{q+1}}$  
the $\F_q$-lines passing through $P_i$. Then  Lemma~\ref{l-lines-cover} implies that
$$M=(q+1)f^*H-\sum_{r=1}^{q^2+q+1} E_r \sim \sum_{\alpha=1}^{q+1}f^*L_{j_{\alpha}}-\sum_{r=1}^{q^2+q+1} E_r=qE_i+\sum_{\alpha=1}^{q+1}L'_{j_{\alpha}}.$$
Thus, $|M|$ is base point free by symmetry and (1) holds. 

(2) and (3) are simple calculations,
%The assertion (2) follows from 
%$((q+1)f^*H-\sum_i E_i) \cdot L'_i=q+1-(q+1)=0.$ 
%The assertion (3) holds by  
%$$M^2=((q+1)f^*H-\sum_{i=1}^{q^2+q+1} E_i)^2=(q+1)^2-(q^2+q+1)=q.$$
and (4) follows from \cite{tal61a,tal61b} (see also  \cite[Proposition 2.1]{HK13}
\footnote{Note that we cite the arXiv version, as the published version omits the proof of \cite[Proposition 2.1]{HK13}.}).  
Further, $g\colon X \to Y$ is the Stein factorisation of $\psi:=\Phi_{|M|}:X \to \mathbb P^2_k$. 
%$$\psi\colon X \xrightarrow{g} Y \xrightarrow{\varphi} \mathbb P^2_k.$$
Thus, (5) holds. 

We now show (6). 
Since $M=g^*M_Y$, (1) implies  that 
$|M_Y|$ is base point free and (5) implies that $h^0(Y, \MO_Y(M_Y))=3$.
Since $M_Y$ is ample, it follows that $\varphi$ is a finite surjective morphism. 
By (3), the degree of $\varphi$ is equal to $q$.

It is enough to show that $\varphi$ is a purely inseparable morphism. 
To this end, we may assume that $k=\overline{\F}_q$. 
By (4), we have that 
$$\psi \circ f^{-1}\colon \mathbb P_k^2 \dashrightarrow \mathbb P^2_k, 
\quad [x:y:z] \mapsto [x^qy-xy^q:y^qz-yz^q:z^qx-zx^q].$$
Generically, 
the rational map $\psi \circ f^{-1}$ can be written by 
$$\Psi\colon \mathbb A_k^2 \setminus \bigcup_{i=1}^{q+1}\widetilde L_i \to \mathbb A^2_k, 
\quad (u, v) \mapsto \left(\frac{v^q-v}{u^qv-uv^q}, \frac{u-u^q}{u^qv-uv^q}\right),$$
where $\widetilde L_1, \dots, \widetilde L_{q+1}$ are the affine lines passing through the origin 
with  coefficients in $\F_q$, and in particular $\bigcup_{i=1}^{q+1}\widetilde L_i=\{u^qv-uv^q=0\}$. 
Fix a general closed point $(\alpha, \beta) \in \mathbb A^2_k$.
It is enough to show that its fibre $\Psi^{-1}((\alpha, \beta))$ consists of one point. 
Let $(u, v) \in \mathbb A_k^2 \setminus \bigcup_{i=1}^{q+1}\tilde L_i$  be 
such that $\Psi(u, v)=(\alpha, \beta)$. 
Since $(\alpha, \beta)$ is chosen to be general, we can assume that the denominators of 
the fractions appearing in the following calculation are always nonzero.
%(e.g. we can assume that $\beta \neq 0$ and $\alpha u+ 1 \neq 0$\footnote{is this obvious? $u$ depends on $\alpha$}). 
We have that 
$$\alpha(u^qv-uv^q)=v^q-v, \quad \beta(u^qv-uv^q)=u-u^q,$$
which implies 
\begin{equation}\label{e-pi}
\alpha(u^q-uv^{q-1})=v^{q-1}-1,
\end{equation}
and 
\begin{equation}\label{e-pi2}
\beta(u^{q-1}v-v^q)=1-u^{q-1}.
\end{equation}
By (\ref{e-pi}), we have that 
\begin{equation}\label{e-pi3}
v^{q-1}=\frac{\alpha u^q+1}{\alpha u+1}.
\end{equation}
Substituting (\ref{e-pi3}) to (\ref{e-pi2}), we get 
\begin{equation}\label{e-pi4}
v=\frac{1}{\beta}\cdot\frac{1-u^{q-1}}{u^{q-1}-v^{q-1}}=\frac{1}{\beta}\cdot\frac{1-u^{q-1}}{u^{q-1}-\frac{\alpha u^q+1}{\alpha u+1}}=-\frac{\alpha u+1}{\beta}.
\end{equation}
Substituting (\ref{e-pi4}) to (\ref{e-pi3}), it follows that 
$$
\alpha u^q+1=(\alpha u+1)v^{q-1}=(\alpha u+1)\left(-\frac{\alpha u+1}{\beta}\right)^{q-1}
=\frac{(-1)^{q-1}(\alpha^q u^q +1)}{\beta^{q-1}},
$$
which implies that 
$$
u^q=\frac{-\beta^{q-1}+(-1)^{q-1}}{\alpha\beta^{q-1}-(-1)^{q-1}\alpha^q}.
$$
This implies that $u$ is uniquely determined by $(\alpha, \beta)$, 
and so is $v$ by (\ref{e-pi4}). 
Thus, (6) holds. 
\end{proof}

\begin{rem}\label{r-anti-map}
Using the  same notation as in Theorem~\ref{t-pi}, 
if $q=2$, then $M=-K_X$ and $M_Y=-K_Y$. 
This can be considered as an analogue 
of the fact that 
a smooth del Pezzo surface $S$ with $K_S^2=2$ is a double cover of $\mathbb P^2$ 
which is induced by the anti-canonical system $|-K_X|$. Indeed, 
both  $X$ and $S$ are obtained by taking blowups along seven points. 
\end{rem}

\begin{prop}\label{p-bertini}
We use Notation~\ref{n-q}. 
Let  
$$M:=(q+1)f^*H-\sum_{i=1}^{q^2+q+1} E_i.$$
Then the following  hold:
\begin{enumerate}
\item If $k=\F_q$, then for any element $D \in |M|$, 
there exists a unique $\F_q$-point $P_i$ on $\mathbb P^2_{\F_q}$ 
such that 
$$D=qE_i+\sum_{\alpha=1}^{q+1}L'_{j_{\alpha}}$$
where $L_{j_1}, \dots, L_{j_{q+1}}$ are the $\F_q$-lines passing through $P_i$. 
\item If $k$ is an algebraically closed field, then a general member of $|M|$ is integral. 
\item Any element of $|M|$ is not smooth. 
\end{enumerate}
\end{prop}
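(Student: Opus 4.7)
The plan is to handle (1) by an $\F_q$-point count, (2) by combining topological irreducibility (via the universal homeomorphism $\varphi$) with primitivity of $M$ in $\Pic(X)$, and (3) by a case analysis on whether a contracted curve $L'_j$ is contained in $D$, culminating in a genus contradiction. For (1), recall from the proof of Theorem~\ref{t-pi}(1) that for every $\F_q$-point $P_i \in \mathbb P^2_{\F_q}$ the divisor $D_i := qE_i + \sum_{\alpha=1}^{q+1} L'_{j_\alpha(i)}$ belongs to $|M|$, where $L_{j_1}, \dots, L_{j_{q+1}}$ are the $\F_q$-lines through $P_i$. These $q^2+q+1$ divisors are $\F_q$-rational and pairwise distinct, being distinguished by the coefficient of $E_i$. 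Since $h^0(X, \MO_X(M)) = 3$ with an $\F_q$-rational basis by Theorem~\ref{t-pi}(4), $|M|_{\F_q} \cong \mathbb P^2_{\F_q}$ has exactly $q^2+q+1$ $\F_q$-points, so the $D_i$ exhaust $|M|(\F_q)$, giving both existence and uniqueness.

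For (2), note that $M = (q+1)f^*H - \sum_i E_i$ is primitive in $\Pic(X) = \mathbb Z \cdot f^*H \oplus \bigoplus_i \mathbb Z \cdot E_i$ (its coefficient on $E_1$ is $-1$); hence any $D \in |M|$ with irreducible support is automatically a prime divisor, and therefore integral. It remains to prove that, for general $\ell$, the support of $D = \psi^{-1}(\ell) = g^{-1}(\varphi^{-1}(\ell))$ is irreducible, where $\psi = \varphi \circ g$ and $\ell \subset \mathbb P^2$ is the line corresponding to $D$. By Theorem~\ref{t-pi}(6), $\varphi$ is a universal homeomorphism, and since universal homeomorphisms are stable under base change, $\varphi^{-1}(\ell) \to \ell$ is one too; in particular $\varphi^{-1}(\ell)$ is topologically irreducible. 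For general $\ell$ avoiding the finite set $\{\varphi(g(L'_j))\}_j$, the contraction $g$ adds no exceptional component, so $D$ has irreducible support.

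For (3), I may assume $k$ algebraically closed, since smoothness is preserved under base change. Suppose $D \in |M|$ is smooth. If $L'_j \subset \Supp(D)$ for some $j$, write $D = aL'_j + D'$ with $a \geq 1$ and $L'_j \not\subset \Supp(D')$; by Theorem~\ref{t-pi}(2), $0 = M \cdot L'_j = -qa + D' \cdot L'_j$ forces $D' \cdot L'_j = qa \geq q \geq 2$, so $D$ is either non-reduced along $L'_j$ (if $a \geq 2$) or singular at the intersection points of its distinct components $L'_j$ and $D'$ (if $a=1$), contradicting smoothness. Otherwise $D \cap L'_j = \emptyset$ for every $j$, so $g$ restricts to an isomorphism between $D$ and $D_Y := g(D) \subset Y^{\mathrm{sm}}$, with $D_Y = \varphi^{-1}(L)$ for some line $L \subset \mathbb P^2$. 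Miracle flatness ($Y$ is Cohen--Macaulay as a normal surface, $\mathbb P^2$ is regular) makes $\varphi$ flat of degree $q$, so $\varphi|_{D_Y} \colon D_Y \to L$ is a finite flat universal homeomorphism of degree $q$; reducedness of $D_Y$ then makes $k(D_Y)/k(L)$ purely inseparable of degree $q = p^e$. Since $k$ is perfect, $k(D_Y) = k(L)^{1/p^e} \cong k(s)$, so $D_Y \cong \mathbb P^1$ and $D$ is a smooth projective curve of geometric genus $0$. However, adjunction on $X$ together with $M^2 = q$ from Theorem~\ref{t-pi}(3) and the direct computation $M \cdot K_X = q^2 - 2q - 2$ gives arithmetic genus $p_a(D) = q(q-1)/2 \geq 1$, contradicting the equality of arithmetic and geometric genus for a smooth integral curve. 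The main obstacle is this second case, where the universal-homeomorphism/purely inseparable structure must be packaged to force $D_Y \cong \mathbb P^1$ before being confronted with the adjunction computation.
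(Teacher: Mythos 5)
Your proof is correct, but for parts (2) and (3) it takes a genuinely different route from the paper's. Part (1) is essentially the paper's argument: both count the $q^2+q+1$ special divisors $qE_i+\sum_{\alpha}L'_{j_\alpha}$ against $\#|M|=\#\mathbb P^2(\F_q)=q^2+q+1$, using $h^0(X,\MO_X(M))=3$ from Theorem~\ref{t-pi}(4). For (2) and (3) the paper works with the explicit equation $f_*D=\{\gamma(x^qy-xy^q)+\alpha(y^qz-yz^q)+\beta(z^qx-zx^q)=0\}$ and the Jacobian criterion, which exhibits $[\alpha^{1/q}:\beta^{1/q}:\gamma^{1/q}]$ as the unique singular point of $f_*D$; this one computation yields reducedness (smooth outside a single point), non-smoothness (for non-$\F_q$-rational members the singular point lies off $f(\Ex(f))$), while irreducibility of the general member is obtained by degenerating inside $|M_Y|$ to the members $qE_i^Y$ coming from (1). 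You instead get irreducibility of the support topologically from the universal homeomorphism $\varphi$ of Theorem~\ref{t-pi}(6), reducedness from primitivity of $M$ in $\Pic(X)$ (a clean observation the paper does not use), and non-smoothness from the dichotomy ``some $L'_j\subset\Supp D$'' versus ``$D$ disjoint from every $L'_j$'', the second case being excluded by showing that $k(D_Y)$ is a degree-$q$ purely inseparable, hence rational, extension of $k(L)$ and comparing with $p_a(D)=1+\tfrac12(M^2+M\cdot K_X)=\tfrac12(q^2-q)\ge 1$. Your treatment of (3) has the merit of handling every member of $|M|$ uniformly and without the defining equation; the cost is that it leans on miracle flatness for $\varphi$ and on the fact that a purely inseparable extension of $k(s)$ of degree $p^e$ over a perfect field equals $k(s)^{1/p^e}$, where the paper needs only a derivative computation. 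The individual steps all check out: $D'\cdot L'_j=qa>0$ in your first case, the irreducibility of $D_Y=\varphi^{-1}(L)$ needed to speak of $k(D_Y)$ (which follows from the homeomorphism with $L$), and the intersection numbers $M^2=q$, $M\cdot K_X=q^2-2q-2$.
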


\begin{proof}
Note that for each $\F_q$-point  $P_i$ on $\mathbb P^2_{\F_q}$, 
the divisor $D=qE_i+\sum_{\alpha=1}^{q+1}L'_{j_{\alpha}}$, as in (1), 
is an element of $|M|$. Thus, there are $q^2+q+1$ of such divisors. 
On the other hand, (4) of Theorem~\ref{t-pi} implies
$$\#|M|=\frac {q^3-1}{q-1}.$$
Thus, (1) holds (see also \cite[Proposition~2.3]{HK13}).

We now show (2) and (3). 
To this end, we may assume that $k$ is algebraically closed. 
We set $M_Y:=g_*M$. 
By (1), there exists an irreducible divisor in $|M_Y|$. 
Thus, any  general element of $|M_Y|$ is irreducible. 

Since, by Theorem~\ref{t-pi},  $|M_Y|$ is base point free, if $D\in |M|$ is a general element, then $D$ is irreducible. 
By Theorem~\ref{t-pi}, we may write 
$$f_*D=\{\gamma(x^qy-xy^q)+\alpha(y^qz-yz^q)+\beta(z^qx-zx^q)=0\}$$
for some $(\alpha, \beta, \gamma) \in k^3 \setminus \{(0, 0, 0)\}$. 
By the Jacobian criterion for smoothness, 
it follows that $[\alpha^{1/q}:\beta^{1/q}:\gamma^{1/q}]$ is a unique singular point of $f_*D$. 
Since $f_*D$ is smooth outside $[\alpha^{1/q}:\beta^{1/q}:\gamma^{1/q}]$, 
we see that $f_*D$ is reduced. 
Since $\alpha, \beta, \gamma$ are chosen to be general, 
it follows that $[\alpha^{1/q}:\beta^{1/q}:\gamma^{1/q}]$ is not an $\F_q$-point. 
Thus, $D$ is the proper transform of $f_*D$, hence $D$ is integral. 
Thus, (2) holds. 
Since $f_*D$ has a singular point outside $f(\Ex(f))$, it follows that $D$ is not smooth. 
Thus, (3) holds. 
\end{proof}

%\begin{rem}\label{r-bertini}
%We use notation as in Theorem~\ref{t-pi}. 
%Assume that $k=\F_q$. 
%Thanks to \cite[Proposition~2.3]{HK13}, 
%for any member $D$ of $|M|$, there exists a unique $\F_q$-point $P_j$ on $\mathbb P^2_{\F_q}$ 
%such that $$D=qE_j+\sum_{i=1}^{q+1}L'_i$$
%where $L_1, \dots, L_{q+1}$ is the $\F_q$-lines passing through $P_j$. 
%In particular, each of $M$ and $M_Y$ 
%has no smooth member over $\F_q$ although they are base point free (cf. \cite[Corollary 10.9]{Har77}). 
%\end{rem}

\section{The Kleimann--Mori cone}\label{s-km-cone}

The main result of this Section is Theorem~\ref{t-KM-cone} 
which determines the generators of the Kleimann--Mori cone of $X$ as in Notation~\ref{n-2}. 
To this end, we classify the curves whose self-intersection numbers are negative 
(cf. Proposition~\ref{p-negative-curves}).

\begin{lem}\label{l-negative}
We use Notation~\ref{n-2}. %and assume that $k$ is an algebraically closed field. 
The following  hold:
\begin{enumerate}
\item If $C$ is a curve on $X$ which satisfies $C^2=-1$ and differs from any of $E_1, \dots, E_7$, 
then 
$$\deg(f_*(C)) \leq 3.$$ 
\item  If $C$ is a curve on $X$ with $C^2=-2$, 
then $$\deg(f_*(C)) \leq 2.$$
\end{enumerate}
\end{lem}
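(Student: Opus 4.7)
The plan is to translate $C^2 \in \{-1, -2\}$ into numerical identities involving $d := \deg(f_*C)$ and the multiplicities $m_i := \mult_{P_i}(f_*C)$, and then bound $d$ by combining adjunction with Cauchy--Schwarz. Since $d$, the $m_i$, $C^2$, and $K_X \cdot C$ are all preserved under base change, I may first assume $k = \bar{k}$, so that $p_a(C) \geq 0$ holds for any integral curve on $X$. Because $f$ contracts only $E_1, \ldots, E_7$, the hypothesis $C \neq E_i$ (automatic in (2) since $E_i^2 = -1 \neq -2$) forces $d \geq 1$, and each $m_i \geq 0$ is clear. Writing $C$ as the strict transform of $f_*C$ gives $C = d\, f^*H - \sum_{i=1}^{7} m_i E_i$ in $\Pic(X)$, and combining with $K_X = -3 f^*H + \sum_{i} E_i$, together with the standard intersection numbers $(f^*H)^2 = 1$, $f^*H \cdot E_i = 0$, $E_i \cdot E_j = -\delta_{ij}$, I compute
\begin{equation*}
C^2 = d^2 - \sum_{i=1}^{7} m_i^2, \qquad K_X \cdot C = -3d + \sum_{i=1}^{7} m_i.
\end{equation*}

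Next I would apply adjunction: the identity $C^2 + K_X \cdot C = 2 p_a(C) - 2 \geq -2$ rearranges to $\sum m_i \geq 3d - 2 - C^2$. In case (1) this reads $\sum m_i \geq 3d - 1$; in case (2), $\sum m_i \geq 3d$.

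To conclude, Cauchy--Schwarz on the seven nonnegative terms $m_i$ yields $\bigl(\sum_{i=1}^{7} m_i\bigr)^2 \leq 7 \sum_{i=1}^{7} m_i^2 = 7(d^2 - C^2)$. Substituting the lower bounds from adjunction, in case (1) one gets $(3d-1)^2 \leq 7(d^2+1)$, which rearranges to $d^2 - 3d - 3 \leq 0$ and hence $d \leq 3$; in case (2) one gets $9 d^2 \leq 7(d^2+2)$, i.e.\ $d^2 \leq 7$, so $d \leq 2$.

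The main (though minor) obstacle is justifying $p_a(C) \geq 0$ over a general field $k$: for an integral curve, $H^0(C, \MO_C)$ can strictly contain $k$, and then $1 - \chi(\MO_C)$ might be negative. This is handled by the base-change reduction mentioned above, after which the adjunction bound takes its usual form and the Cauchy--Schwarz estimate closes the argument.
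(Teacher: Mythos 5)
Your proof is correct and takes essentially the same route as the paper's: the same decomposition $C\equiv d\,f^*H-\sum_{i}m_iE_i$, the same use of adjunction to control $K_X\cdot C$, and literally the same Cauchy--Schwarz bounds $(3d-1)^2\le 7(d^2+1)$ and $9d^2\le 7(d^2+2)$. The only differences are cosmetic: the paper invokes nefness of $-K_X$ (Lemma~\ref{l-Y-sing}(6)) to pin down $K_X\cdot C$ exactly ($=-1$, resp.\ $=0$) where you keep just the one-sided adjunction inequality, and your base-change reduction for $p_a(C)\ge 0$ glosses over the fact that $C_{\bar k}$ may fail to be integral --- a point the paper's own proof does not address either.
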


\begin{proof}
We show (1). 
We have 
$$C \sim af^*\MO_{\mathbb P^2}(1)+ \sum_{i=1}^7 b_iE_i$$
where  $a=\deg(f_*(C))> 0$ and  $b_1,\dots,b_7 \in \Z$. 
%By our assumption, we have that $a>0$ and $b_i \leq 0$. 
Since $q=2$, Lemma~\ref{l-Y-sing} implies that 
$C$ is a $(-1)$-curve. Thus,  we have 
\begin{eqnarray*}
-1=C^2&=&a^2- \sum_{i=1}^7 b_i^2\\
-1=K_X \cdot C&=&(-3f^*H+\sum_{i=1}^7 E_i)\cdot (af^*H+ \sum_{i=1}^7 b_iE_i)
=-3a-\sum_{i=1}^7 b_i.\\
\end{eqnarray*}
By Schwarz's inequality, we obtain 
$$(3a-1)^2=\left(\sum_{i=1}^7b_i\right)^2 \leq 7\sum_{i=1}^7 b_i^2=7(a^2+1),$$
which implies $a^2-3a-3 \leq 0.$ 
Thus, (1) holds. 
The proof of (2) is   similar. 
%We show (2). 
%By the same argument as in (1), we have that 
%$$C \sim a\pi^*H+ \sum_{i=1}^n b_iE_i$$
%for some $a, b_i \in \Z$ such that $a>0$ and $b_i \leq 0$. 
%By the assumption, we obtain 
%\begin{eqnarray*}
%-2=C^2&=&a^2- \sum_{i=1}^n b_i^2\\
%0=K_X \cdot C&=&(-3\pi^*H+\sum_i E_i)\cdot (a\pi^*H+ \sum_{i=1}^n b_iE_i)
%=-3a+\sum_i b_i.\\
%\end{eqnarray*}
%By Schwarz's inequality, we obtain 
%$$(3a)^2=(\sum_i b_i)^2 \leq n\sum_i b_i^2=n(a^2+2).$$
\end{proof}

\begin{lem}\label{l-conic-cubic}
We use Notation~\ref{n-2}. %and assume that $k$ is an algebraically closed field. 
Let $C$ be a curve on $X$ such that $C_0:=f(C)$ is a conic or a cubic. 
Then $C^2 \geq 0$. 
\end{lem}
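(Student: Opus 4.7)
The plan is to exploit B\'ezout's theorem on $\mathbb P^2_k$ together with the Fano-plane incidence structure of the seven $\F_2$-points and seven $\F_2$-lines. Since $f\colon X\to\mathbb P^2_k$ is birational and $C$ is not contracted, $C_0:=f(C)$ is an irreducible reduced plane curve of degree $d:=\deg C_0\in\{2,3\}$, and $C$ is its strict transform, so
$$C\;=\;d\,f^*H-\sum_{i=1}^{7}m_i\,E_i,\qquad m_i:=\mult_{P_i}C_0\in\Z_{\geq 0},$$
giving $C^2=d^2-\sum_{i=1}^{7}m_i^2$. It therefore suffices to show $\sum m_i^2\leq d^2$. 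Because $C_0$ is irreducible of degree at least $2$, no line lies in $C_0$, so for every line $L\subset\mathbb P^2_k$ B\'ezout yields $L\cdot C_0 = \sum_{P\in L\cap C_0}(L\cdot C_0)_P=d$.

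For $d=2$, the arithmetic-genus formula forces the irreducible conic $C_0$ to be smooth, so $m_i\in\{0,1\}$ and $\sum m_i^2=|S|$ where $S:=\{i\colon P_i\in C_0\}$. B\'ezout applied to an $\F_2$-line $L$ gives $|S\cap L|\leq 2$, i.e.\ no three points of $S$ are collinear over $\F_2$. A Fano-plane double-count then forbids $|S|\geq 5$: any two $\F_2$-points determine a unique $\F_2$-line, so under the non-collinearity hypothesis each of the seven $\F_2$-lines carries at most one of the $\binom{5}{2}=10$ pairs from $S$, contradicting $10>7$. Hence $|S|\leq 4$ and $C^2\geq 0$.

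For $d=3$, an irreducible plane cubic has at most one singular point, of multiplicity at most $2$ (multiplicity $3$ would split $C_0$ into three concurrent lines). If $C_0$ is smooth at each $P_i$, then $m_i\leq 1$ and $\sum m_i^2\leq 7<9$. Otherwise, after relabelling, $C_0$ has a double point at $P_j$, with $m_j=2$ and $m_i\in\{0,1\}$ for $i\neq j$; the crucial claim is that $C_0$ misses at least one of the remaining six $\F_2$-points. If not, then by Lemma~\ref{l-lines-cover} each of the three $\F_2$-lines $L$ through $P_j$ contains $P_j$ together with two further points $P_{i_1},P_{i_2}\in C_0$, and B\'ezout would force
$$L\cdot C_0\;\geq\;(L\cdot C_0)_{P_j}+(L\cdot C_0)_{P_{i_1}}+(L\cdot C_0)_{P_{i_2}}\;\geq\;2+1+1\;=\;4\;>\;3,$$
whence $L\subset C_0$, contradicting the irreducibility of $C_0$. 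Hence $\sum_{i\neq j}m_i\leq 5$ and $\sum m_i^2\leq 4+5=9$. The main obstacle is precisely this last case: the unique cubic passing through all seven $\F_2$-points with a singularity at $P_j$ is the reducible union $L_1\cup L_2\cup L_3$ of the three $\F_2$-lines through $P_j$ (with a triple point at $P_j$), and one must invoke the genuine irreducibility of $C_0$ to exclude it and avoid a spurious curve of self-intersection $-1$.
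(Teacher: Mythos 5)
Your proof is correct and follows essentially the same strategy as the paper's: B\'ezout against the $\F_2$-lines combined with the Fano-plane incidence structure, ruling out a conic through five of the $P_i$ and a nodal cubic through all seven. The only cosmetic difference is in the conic case, where you double-count pairs of points against the seven lines while the paper instead exhibits a single $\F_2$-line avoiding the two omitted points; both yield the same bound.
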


\begin{proof}
First, we assume that $C_0$ is conic. 
Suppose that $C_0$ passes through 5 of the $\F_2$ -points, say $P_1, \dots, P_5$. 
Let us derive a contradiction. 
Let $P_6$ and $P_7$ be the remaining two $\F_2$-points. 
Since there are exactly three $\F_2$-lines passing through $P_6$ (resp. $P_7$), 
we can find an $\F_2$-line $L_i$ such that $P_6 \not\in L_i$ and $P_7 \not\in L_i$. 
In particular, $C_0 \cap L_i$ contains at least three points, within $P_1, \dots, P_5$. 
This contradicts the fact that $C_0 \cdot L_i=2$. 

Now, we assume that $C_0$ is cubic. 
If $C_0$ is smooth, then  $C^2 \geq C^2_0-7=2$. 
Thus, we may assume that $C_0$ is singular and $C^2<0$. 
It follows that $C_0$ must pass through all the $\F_2$-points $P_1, \dots, P_7$
and the unique singular point of $C_0$ is an $\F_2$-point, say $P_1$. 
Let $L_j$ be an $\F_2$-line passing through $P_1$. 
Since $C_0 \cap L_j$ contains at least three $\F_2$-rational points $P_1, P_{i}, P_{i'}$, 
we have that $C_0 \cdot L_j \geq 4$. 
This contradicts the fact that $C_0 \cdot L_j=3$. 
Thus, the claim follows. 
\end{proof}

%\begin{lem}\label{l-cubic}
%We use Notation~\ref{n-2}. %and assume that $k$ is algebraically closed field. 
%Let $C$ be a curve on $X$ such that $C_0:=f(C)$ is a cubic curve. 
%Then $C^2 \geq 0$. 
%\end{lem}

\begin{prop}\label{p-negative-curves}
We use Notation~\ref{n-2}. %and assume that $k$ is an algebraically closed field. 
Let $C$ be a curve on $X$ with $C^2<0$. 
Then $C$ is equal to one of the curves $E_1, \dots, E_7, L'_1, \dots, L'_7$. 
\end{prop}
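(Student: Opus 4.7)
The plan is to bound the self-intersection of a negative curve using the adjunction formula together with the nefness of $-K_X$, then bound the degree of its image in $\mathbb{P}^2$ using the preparatory Lemmas~\ref{l-negative} and~\ref{l-conic-cubic}, and finally eliminate the surviving cases via the combinatorics of the Fano plane recorded in Remark~\ref{r-configuration}.

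First I would reduce to the case where $C$ is not $f$-exceptional: otherwise $C = E_i$ for some $i$ and we are done. Writing $d := \deg f_*C \geq 1$ and $m_i := \mult_{P_i}(f_*C) \geq 0$, I have $C \sim df^*H - \sum_{i=1}^7 m_i E_i$, so that $C^2 = d^2 - \sum m_i^2$ and $K_X \cdot C = -3d + \sum m_i$. Since $-K_X$ is nef by Lemma~\ref{l-Y-sing}(6), the adjunction formula $2p_a(C) - 2 = C^2 + K_X \cdot C$, combined with $p_a(C) \geq 0$, forces $C^2 \geq -2 - K_X \cdot C \geq -2$. So $C^2 \in \{-1,-2\}$ and it remains to analyze each case.

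For $C^2 = -1$, Lemma~\ref{l-negative}(1) gives $d \leq 3$. The cases $d = 2$ and $d = 3$ are ruled out immediately by Lemma~\ref{l-conic-cubic}. For $d = 1$, the equation $\sum m_i^2 = 2$ with $m_i \in \{0,1\}$ forces $f_*C$ to be a line through exactly two of $P_1,\dots,P_7$; but any two $\mathbb{F}_2$-rational points span an $\mathbb{F}_2$-line, and by Remark~\ref{r-configuration} every such line contains three $\mathbb{F}_2$-points, a contradiction. For $C^2 = -2$, Lemma~\ref{l-negative}(2) gives $d \leq 2$. The case $d = 2$ is excluded because an irreducible plane conic is smooth (forcing all $m_i \in \{0,1\}$), so $\sum m_i^2 = 6$ would require $f_*C$ to pass through six of the seven $\mathbb{F}_2$-points, contradicting Lemma~\ref{l-conic-cubic}. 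Finally, $d = 1$ yields $\sum m_i^2 = 3$, so $f_*C$ is a line through exactly three $\mathbb{F}_2$-points and hence coincides with some $\mathbb{F}_2$-line $L_j$, giving $C = L'_j$.

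I expect no serious obstacle: every subcase dispatches quickly once the preparatory lemmas are in hand. The conceptual content is entirely the tight combinatorial rigidity of the $\mathbb{F}_2$-point/$\mathbb{F}_2$-line incidence structure on $\mathbb{P}^2$, which leaves no room for negative curves beyond the $E_i$ and $L'_j$; the only mildly subtle observation is that an irreducible plane conic is automatically smooth in any characteristic, so multiplicities at interior points cannot exceed one.
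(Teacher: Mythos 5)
Your proposal is correct and follows essentially the same route as the paper: use nefness of $-K_X$ and adjunction to get $C^2\geq -2$, apply Lemma~\ref{l-negative} to bound $\deg f_*C$, eliminate conics and cubics via Lemma~\ref{l-conic-cubic}, and identify the remaining lines with the $\F_2$-lines via the incidence combinatorics. Your version merely makes the final step more explicit by splitting into the cases $C^2=-1$ and $C^2=-2$ and tracking the multiplicities $m_i$, which the paper compresses into the observation that a line with $C^2<0$ must pass through at least two $\F_2$-points and hence equals some $L_j$.
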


\begin{proof}
Assume that $C\notin\{E_1, \dots, E_7\}$. 
%It is enough to show that $C$ is equal to one of $L'_1, \dots, L'_7$. 
Let $C_0:=f_*C$. 
Since $-K_X$ is nef and big, we have that $C^2\geq -2$.  
Lemma~\ref{l-negative} implies that $\deg C_0 \leq 3$. 
By Lemma~\ref{l-conic-cubic}, we have that $\deg C_0=1$, hence $C_0$ is a line. 
Then $C_0$ passes through at least two of the $\F_2$-points. 
It follows that $C_0$ is equal to some $L_i$, hence $C=L'_i$, as desired. 
\end{proof}

\begin{thm}\label{t-KM-cone}
We use Notation~\ref{n-2}. %and assume that $k$ is an algebraically closed field. 
Then 
$$\overline{NE}(X)=NE(X)=\sum_{i=1}^7\mathbb R_{\geq 0}[E_i]+\sum_{j=1}^7\mathbb R_{\geq 0}[L'_j].$$
\end{thm}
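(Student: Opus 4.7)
The approach is to combine the cone theorem with the explicit description of the anticanonical contraction $g\colon X\to Y$ and Proposition~\ref{p-negative-curves}. The inclusion $\sum_i\R_{\geq 0}[E_i]+\sum_j\R_{\geq 0}[L'_j]\subseteq\overline{NE}(X)$ is immediate, and the equality $\overline{NE}(X)=NE(X)$ will follow once rational polyhedrality is established. By Lemma~\ref{l-Y-sing}(6), $-K_X$ is nef and big, so invoking the cone theorem for smooth projective surfaces in positive characteristic yields rational polyhedrality together with a decomposition
\[
\overline{NE}(X)=F+\sum_{\ell}\R_{\geq 0}R_\ell,
\]
where $F:=\overline{NE}(X)\cap\{K_X=0\}$ is the $K_X$-trivial face (note $K_X\cdot\alpha\leq 0$ for all $\alpha\in\overline{NE}(X)$, since $-K_X$ is nef) and each $R_\ell$ is a $K_X$-negative extremal ray.

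To identify $F$, I would use the anticanonical morphism $g\colon X\to Y$ of Theorem~\ref{t-pi} (applied with $q=2$, so that $M=-K_X$ by Remark~\ref{r-anti-map}), which contracts exactly the $L'_j$ onto a surface with $\rho(Y)=1$ and $-K_Y$ ample by Lemma~\ref{l-Y-sing}(1),(5). For $\alpha\in F$ the projection formula gives
\[
g_*\alpha\cdot(-K_Y)=\alpha\cdot g^*(-K_Y)=\alpha\cdot(-K_X)=0,
\]
and since $-K_Y$ is ample while $g_*\alpha\in\overline{NE}(Y)$, this forces $g_*\alpha=0$. As $F$ is a face of the polyhedral cone $\overline{NE}(X)$, its extremal rays are extremal rays of $\overline{NE}(X)$ contained in $\ker g_*$, and each such is spanned by an irreducible curve contracted by $g$, namely one of the $L'_j$. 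Hence $F=\sum_j\R_{\geq 0}[L'_j]$.

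For each $K_X$-negative extremal ray $R_\ell$, Mori's contraction theorem produces a morphism $\phi_\ell\colon X\to Z_\ell$ of one of three possible types: a contraction to a point (forcing $\rho(X)=1$), a $\mathbb P^1$-fibration to a curve (forcing $\rho(X)=2$), or the divisorial contraction of a single $(-1)$-curve. Since $\rho(X)=8$, only the last possibility occurs, so $R_\ell$ is spanned by a $(-1)$-curve, and by Proposition~\ref{p-negative-curves} this must be one of the $E_i$. Combining the descriptions of $F$ and of the $R_\ell$ then yields the claimed equality; the main technical input is the cone and contraction theorems for smooth projective surfaces in positive characteristic, after which the identification of the extremal rays via the geometry of $g$ and Proposition~\ref{p-negative-curves} is essentially automatic.
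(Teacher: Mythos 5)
Your overall strategy is viable and genuinely different from the paper's, but as written there is a circularity in the treatment of the $K_X$-trivial face. The cone theorem applied to $(X,0)$ with $-K_X$ merely nef and big controls only the $K_X$-negative part of $\overline{NE}(X)$: it gives finiteness (using bigness of $-K_X$) of the $K_X$-negative extremal rays and that these are spanned by curves, but it says nothing about the face $F=\overline{NE}(X)\cap K_X^{\perp}$. You nevertheless assert at the outset that $\overline{NE}(X)$ is rational polyhedral, and later use this polyhedrality --- together with the unproved claim that \emph{every} extremal ray of $\overline{NE}(X)$, including the $K_X$-trivial ones, is spanned by an irreducible curve --- to conclude that the extremal rays of $F$ are the $[L'_j]$. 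Neither fact follows from what you have invoked; in general the closure of the cone of curves can have extremal rays containing no curve class at all. The clean fix is precisely the paper's opening move: take $\Delta=\epsilon\sum_j L'_j$ for $0<\epsilon\ll 1$, so that $(X,\Delta)$ is klt and $-(K_X+\Delta)$ is ample (since $-K_X=g^*(-K_Y)$ with $-K_Y$ ample and $-\sum_j L'_j$ is $g$-ample), and apply the cone theorem \cite[Theorem~1.7]{Tana} to this log Fano pair; this yields at once that $NE(X)$ is closed and generated by finitely many extremal rays, each spanned by a curve. Alternatively, you can repair the face computation without any polyhedrality input by showing directly that $\overline{NE}(X)\cap\ker g_*=\sum_j\R_{\geq 0}[L'_j]$: any class in $\ker g_*$ equals $\sum_j c_j[L'_j]$ because the $[L'_j]$ span $\ker g_*$, and pairing with the nef classes $h_j^*A_j$, where $h_j$ contracts all $L'_i$ with $i\neq j$ and $A_j$ is ample on the target, forces each $c_j\geq 0$.

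Once that is in place, the rest of your argument (the projection formula showing $F\subseteq\ker g_*$; the contraction theorem showing each $K_X$-negative extremal ray is spanned by a $(-1)$-curve because $\rho(X)=8$; Proposition~\ref{p-negative-curves} identifying the curves) is correct. Note, though, that after the log Fano cone theorem is invoked the paper's route is substantially shorter: it quotes \cite[Theorem~4.3]{Tanb} to the effect that every extremal ray is spanned by a curve of negative self-intersection, and then Proposition~\ref{p-negative-curves} finishes immediately, with no need for the trivial-face/negative-ray dichotomy or for the classification of extremal contractions.
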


\begin{proof}
Since there exists an effective $\Q$-divisor $\Delta$ 
such that $(X, \Delta)$ is klt and $-(K_X+\Delta)$ is ample, the cone theorem \cite[Theorem~1.7]{Tana} implies that  
$NE(X)$ is closed and  generated by the extremal rays spanned by curves. 
By \cite[Theorem~4.3]{Tanb}, any extremal ray of $NE(X)$ is generated by a curve $C$ 
whose self-intersection number is negative. 
Thus, the claim follows from Proposition~\ref{p-negative-curves}. 
\end{proof}

\section{Relation to Keel--M\textsuperscript{c}Kernan surfaces}\label{s-km-surfaces}

The goal of this Section is to prove Proposition~\ref{p-km-surfaces}  which shows that the surface $X$, constructed in Notation~\ref{n-2}, is isomorphic 
to some surface obtained by Keel--M\textsuperscript{c}Kernan \cite[end of \S 9]{KM99}. 

We first recall their construction. 
Let $k$ be a field of characteristic two. 
We fix a $k$-rational point in $\mathbb P^2_k$ and a conic over $k$ as follows: 
$$Q:=[0:0:1] \in \mathbb P^2_k, \quad C:=\{xy+z^2=0\} \subset \mathbb P^2_k.$$
Note that any line through $Q$ is tangent to $C$. 
Let $\varphi_0\colon S_0 \to \mathbb P^2_k$ be the blowup at $Q$. 
We choose $k$-rational points $P_1, \dots, P_d$ at $\varphi_0^{-1}(C)$. 
We first consider the blowup along these points $\psi\colon S'_0 \to S_0$ and 
then we  take the blowup $S \to S'_0$ along the intersection $\Ex(\psi) \cap \psi_*^{-1}(\varphi^{-1}(C))$, 
where $\psi_*^{-1}(\varphi_0^{-1}(C))$ is the proper transform of $\varphi^{-1}(C)$. 
Note that the intersection $\Ex(\psi) \cap \psi_*^{-1}(\varphi^{-1}(C))$ is a collection of $k$-rational points. 
We call $S$ a {\em Keel--M\textsuperscript{c}Kernan surface} of degree $d$ over $k$. 

\medskip

Let us recall a well-known result on the theory of Severi--Brauer varieties. 

\begin{lem}\label{l-SB}
Let $X$ be a projective scheme over $\F_q$. 
Let $\overline{\F}_q$ be the algebraic closure of $\F_q$. 
If the base change $X \times_{\F_q} \overline{\F}_q$ is $\overline{\F}_q$-isomorphic to $\mathbb P^n_{\overline{\F}_q}$, 
then $X$ is $\F_q$-isomorphic to $\mathbb P^n_{\F_q}$. 
\end{lem}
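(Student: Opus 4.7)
The plan is to recognise $X$ as a Severi--Brauer variety over $\F_q$ and appeal to the triviality of the Brauer group of a finite field. By hypothesis $X \times_{\F_q} \overline{\F}_q \cong \mathbb P^n_{\overline{\F}_q}$, so $X$ is by definition a Severi--Brauer variety of dimension $n$ over $\F_q$, i.e.\ an étale twisted form of $\mathbb P^n_{\F_q}$.

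First I would invoke the classical fact that isomorphism classes of such forms are in natural bijection with the pointed Galois cohomology set $H^1(\mathrm{Gal}(\overline{\F}_q/\F_q),\, \mathrm{PGL}_{n+1}(\overline{\F}_q))$, with the neutral element corresponding to $\mathbb P^n_{\F_q}$. The short exact sequence
$$1 \to \mathbb{G}_m \to \mathrm{GL}_{n+1} \to \mathrm{PGL}_{n+1} \to 1,$$
together with Hilbert's Theorem~90, induces an injection of this pointed set into the Brauer group $\mathrm{Br}(\F_q)$. The proof then reduces to the classical identity $\mathrm{Br}(\F_q) = 0$, which is Wedderburn's theorem that every finite division algebra is commutative. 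Hence the class of $X$ is trivial and $X$ is $\F_q$-isomorphic to $\mathbb P^n_{\F_q}$.

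The only substantive input is the triviality $\mathrm{Br}(\F_q) = 0$; everything else is formal Galois descent, so there is no real obstacle. A hands-on variant avoiding cocycle language goes as follows: the ample generator of $\Pic(X_{\overline{\F}_q}) \cong \Z$ is automatically Frobenius-stable (as the unique ample generator) and hence descends to a line bundle $L$ on $X$, the obstruction to descent lying again in $\mathrm{Br}(\F_q) = 0$; by flat base change $\dim_{\F_q} H^0(X,L) = n+1$, and the resulting $\F_q$-morphism $X \to \mathbb P^n_{\F_q}$ is an isomorphism since it is one after base change to $\overline{\F}_q$. A third, purely point-counting route would show $\#X(\F_q) = 1 + q + \dots + q^n > 0$ via the Grothendieck--Lefschetz trace formula (the étale cohomology of $X$ agreeing with that of $\mathbb P^n$) and then conclude by Châtelet's theorem that a Severi--Brauer variety with a rational point is projective space.
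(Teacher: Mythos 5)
Your main argument (Severi--Brauer varieties over $\F_q$ are classified by $H^1(\mathrm{Gal}(\overline{\F}_q/\F_q),\mathrm{PGL}_{n+1})$, which injects into $\mathrm{Br}(\F_q)=0$) is exactly the route the paper takes, as its proof simply cites the theory of Severi--Brauer varieties and the triviality of the Brauer group of a finite field from Serre's \emph{Local Fields}. Your third variant (produce an $\F_q$-point and apply Ch\^atelet's theorem) likewise matches the paper's stated alternative, which uses Esnault's theorem in place of your trace-formula computation; all of this is correct.
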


\begin{proof}
See, for example, \cite[Chap. X, \S 5, \S 6, \S 7]{Ser79}. As an alternative proof, one can conclude the claim 
from \cite[Corollary~1.2]{Esn03} and Ch\^atelet's theorem \cite[Theorem~5.1.3]{GS06}. 
%\footnote{T: For number theorists, this result seems to be well-known, but I can not find a nice reference: 
%[Ser79] is a famous textbook, however it is difficult to make it clearer for birational geometers. }
%See, for example, \cite[\S 5 Proposition 9, \S 6, \S 7 Proposition 10]{Ser79} (cf. ). 
%By% \cite[Ch~IV, Theorem~6.8]{Kol96} or 
%, $X$ contains an $\F_q$-rational point. 
%Thus, the assertion follows from Ch\^atelet's theorem . 
%Therefore $S$ is obtained by the blowup along all the $\F_2$-rational points on $\mathbb P^2_{\F_2}$
\end{proof}

The following two lemmas may be well-known, 
however we include proofs for the sake of completeness. 
%, we decide to include proofs. 

\begin{lem}\label{l-P2-auto}
Let $k$ be a field. 
Take $k$-rational points 
$$P_1, \cdots, P_4, Q_1, \cdots, Q_4 \in \mathbb P^2_k.$$ 
Assume that no three of $P_1, \cdots, P_4$ (resp. $Q_1, \cdots, Q_4$) 
lie on a single line of $\mathbb P^2_k$. 
Then there exists a $k$-automorphism $\sigma\colon \mathbb P^2_k \to \mathbb P^2_k$ such that $\sigma(P_i)=Q_i$ 
for any $i \in \{1, 2, 3, 4\}$. 
\end{lem}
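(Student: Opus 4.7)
The plan is to reduce to the classical frame, and then build the automorphism by explicit linear algebra over $k$. The underlying statement is that $\mathrm{PGL}_3(k)$ acts simply transitively on ordered $4$-tuples of $k$-rational points in general linear position; I would write it out in the form adapted to our hypotheses.

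First I would reduce to the following special case: for any ordered tuple $(Q_1,\ldots,Q_4)$ of $k$-rational points such that no three are collinear, there exists a $k$-automorphism $\tau$ of $\mathbb P^2_k$ sending the standard frame
\[
R_1:=[1:0:0],\ R_2:=[0:1:0],\ R_3:=[0:0:1],\ R_4:=[1:1:1]
\]
to $(Q_1,\ldots,Q_4)$. Once this is established, applying it separately to the tuples $(P_i)$ and $(Q_i)$ produces $\tau_P$ and $\tau_Q$ in $\mathrm{Aut}_k(\mathbb P^2_k)$ sending the standard frame to the $P_i$ and $Q_i$ respectively, and then $\sigma:=\tau_Q\circ\tau_P^{-1}$ is the desired $k$-automorphism.

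For the special case I would proceed as follows. Choose lifts $q_1,q_2,q_3,q_4\in k^3\setminus\{0\}$ of $Q_1,Q_2,Q_3,Q_4$. Since no three of $Q_1,Q_2,Q_3$ are collinear, the vectors $q_1,q_2,q_3$ are linearly independent in $k^3$, so there is a unique expression
\[
q_4=aq_1+bq_2+cq_3
\]
with $a,b,c\in k$. The hypothesis that no three of $Q_1,Q_2,Q_3,Q_4$ are collinear forces each of $a,b,c$ to be nonzero: if, say, $c=0$, then $q_4\in\mathrm{Span}(q_1,q_2)$, so $Q_1,Q_2,Q_4$ would be collinear. Replacing $q_i$ by $aq_1$, $bq_2$, $cq_3$ for $i=1,2,3$, I form the matrix $A\in\mathrm{GL}_3(k)$ whose columns are $aq_1,bq_2,cq_3$; it still lies in $\mathrm{GL}_3(k)$ because each of $a,b,c$ is nonzero. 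The projective automorphism $\tau$ induced by $A$ sends $R_i$ to $Q_i$ for $i=1,2,3$, and by construction
\[
A\cdot(1,1,1)^{\mathrm{t}}=aq_1+bq_2+cq_3=q_4,
\]
so $\tau(R_4)=Q_4$ as well.

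There is no serious obstacle here: the argument is essentially bookkeeping in linear algebra over $k$, and the only point requiring the general-position hypothesis is ensuring $a,b,c\neq 0$, which I verified above. The content of the lemma is genuinely characteristic-free and field-free: no use is made of algebraic closure, separability, or perfectness, so the statement holds over any field $k$ as claimed.
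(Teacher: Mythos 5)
Your proof is correct and follows essentially the same route as the paper: reduce to the standard frame $[1{:}0{:}0],[0{:}1{:}0],[0{:}0{:}1],[1{:}1{:}1]$, send the first three frame points to $Q_1,Q_2,Q_3$ by the matrix of lifts, and then fix the fourth point by a diagonal rescaling whose entries are nonzero precisely because of the general-position hypothesis. Rescaling the columns up front versus composing with a diagonal automorphism afterwards is the same computation, so there is nothing to add.
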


\begin{proof}
We may assume that 
$$P_1=[1:0:0], \quad P_2=[0:1:0], \quad P_3=[0:0:1], \quad P_4=[1:1:1].$$ 
For each $i \in \{1, 2, 3, 4\}$, we  write $Q_i=[a_i:b_i:c_i]$ for some $a_i, b_i, c_i \in k$. 
Consider the matrix 
$$M:= \left(\begin{array}{ccc} a_1 & a_2 & a_3 \\ b_1 & b_2 & b_3\\ c_1 & c_2 & c_3 \\\end{array} \right).$$
Since  $Q_1, Q_2, Q_3$ do not lie on a  line, it follows that $\det M \neq 0$. 
Let $\tau\colon \mathbb P^2_k \to \mathbb P^2_k$ be the $k$-automorphism induced by $M$. 
In particular,
$$\tau([1:0:0])=Q_1, \quad \tau([0:1:0])=Q_2, \quad \tau([0:0:1])=Q_3.$$
We may write $\tau^{-1}(Q_4)=[d:e:f]$ for some $d, e, f \in k$. 
Again by the assumption, we have that $d,e,f\neq 0$. 
Then the $k$-automorphism 
$$\rho\colon \mathbb P^2_k \to \mathbb P^2_k, \quad [x:y:z]\mapsto [dx:ey:fz]$$ 
satisfies 
$$\rho([1:0:0])=[1:0:0], \quad \rho([0:1:0])=[0:1:0],$$
$$\rho([0:0:1])=[0:0:1], \quad \rho([1:1:1])=[d:e:f].$$ 
Therefore, the  $k$-automorphism $\sigma:=\tau \circ \rho$ satisfies 
$\sigma(P_i)=Q_i$ for any $i \in \{1, 2, 3, 4\}$. 
\end{proof}

\begin{lem}\label{l-strange-unique}
Let $k$ be a field of characteristic two. 
Let $C_1$ and $C_2$ be smooth conics  in $\mathbb P^2_k$. 
Assume that there exist distinct four $k$-rational points $P_1, P_2, P_3, Q$ of $\mathbb P^2_k$ such that 
$\{P_1, P_2, P_3\} \subset C_1 \cap C_2$ and the tangent line $T_{C_i, P_j}$ of $C_i$ at $P_j$ 
passes through $Q$ for any $i \in \{1, 2\}$ and $j \in \{1, 2, 3\}$. 
Then $C_1=C_2$. 
\end{lem}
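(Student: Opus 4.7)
The plan is to exploit the uniqueness of the tangent direction together with Bezout's theorem: the hypothesis forces $C_1$ and $C_2$ to share the same tangent line at each of the three common points $P_1,P_2,P_3$, and three such ``double contacts'' are incompatible with Bezout for a pair of smooth conics unless $C_1=C_2$.

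First I would observe that for each $j\in\{1,2,3\}$ and each $i\in\{1,2\}$, the tangent line $T_{C_i,P_j}$ passes through both $P_j$ and $Q$. Since $P_j\neq Q$ by hypothesis, the line joining these two distinct $k$-rational points is unique, so
\[
T_{C_1,P_j}\;=\;T_{C_2,P_j}\;=\;\overline{P_j Q}.
\]
Thus at each $P_j$ the two smooth conics meet with the same tangent direction, which forces the local intersection multiplicity $i_{P_j}(C_1,C_2)\geq 2$. (This is where the strange-point phenomenon in characteristic two is implicitly used: without it, imposing that three tangent lines to a smooth conic are concurrent would be an extraordinary condition, but in characteristic two every smooth conic satisfies it automatically, which is precisely what makes such pairs of conics easy to come by.)

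Next, suppose for contradiction that $C_1\neq C_2$. Since both are smooth and irreducible, they share no common component, and Bezout gives $C_1\cdot C_2 = 2\cdot 2 = 4$. On the other hand, summing the local intersection multiplicities over $P_1,P_2,P_3$ alone already yields at least $6$, a contradiction. Hence $C_1=C_2$.

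I do not expect a genuine obstacle in carrying this out. The only subtle point is recognising that the hypothesis rigidly prescribes the tangent direction at each $P_j$ as the line $\overline{P_j Q}$; once this is in place, Bezout does the rest immediately.
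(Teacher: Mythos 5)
Your proof is correct, and it takes a genuinely different route from the one in the paper. You observe that $T_{C_i,P_j}$ contains the two distinct points $P_j$ and $Q$, hence equals the line $\overline{P_jQ}$ for both $i$, so the two conics are tangent to each other at each of $P_1,P_2,P_3$; if they were distinct, they would be smooth geometrically irreducible curves with no common component, and the three points of tangential contact would give $\sum_j i_{P_j}(C_1,C_2)\ge 6 > 4 = \deg C_1\cdot \deg C_2$, contradicting B\'ezout. The paper instead first normalises $P_1,P_2,P_3,Q$ to standard position (via its Lemma on automorphisms of $\mathbb P^2$ moving four points in general position), then invokes the classification of strange conics in characteristic two to write each $C_i$ as $a_ix^2+b_ixy+c_iy^2+d_iz^2=0$, and solves for the coefficients to find that both equations reduce to $xy+z^2=0$. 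Your argument is more elementary and self-contained: it needs neither the normalisation lemma nor the external reference for the shape of a strange conic, and the uniqueness step itself is characteristic-free (characteristic two enters only in making the hypothesis non-vacuous, as you note). What the paper's computation buys in exchange is the explicit identification of the conic as $xy+z^2=0$ in suitable coordinates, which matches the conic fixed in the Keel--M\textsuperscript{c}Kernan construction, although only the uniqueness statement is actually used later. The one point worth making explicit in your write-up is the standard fact that two curves smooth at a common point with the same tangent line there have local intersection multiplicity at least $2$ (subtract suitable multiples of the local equations to land in $\mathfrak m_P^2$ and restrict to one of the curves); this holds in any characteristic, so there is no gap.
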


\begin{proof}
By Lemma~\ref{l-P2-auto}, we may assume that 
$$P_1=[1:0:0], \quad P_2=[0:1:0], \quad P_3=[1:1:1], \quad Q=[0:0:1].$$
It is well-known that $C_1$ and $C_2$ are strange curves (e.g.  see \cite[Theorem 1.1]{Fur14}). 
 \cite[Proposition 2.1]{Fur14} implies that for each $i\in \{1, 2\}$, 
$C_i$ is defined by a quadric homogeneous polynomial: 
$$a_ix^2+b_ixy+c_iy^2+d_iz^2 \in k[x, y, z].$$
Since $P_1, P_2, P_3 \in C_i$, we get $a_i=c_i=0$ and $b_i=d_i$. 
In particular, both of $C_1$ and $C_2$ are defined by the same polynomial $xy+z^2$. 
\end{proof}

\begin{prop}\label{p-km-surfaces}
Let $k$ be a field of characteristic two. 
Then any Keel--M\textsuperscript{c}Kernan surface $S$ of degree $3$ over $k$ 
is $k$-isomorphic to the surface $X$ constructed in Notation~\ref{n-2}. 
\end{prop}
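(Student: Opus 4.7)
The plan is to exhibit $S$ as the blowup of $\mathbb P^2_k$ at seven $k$-rational points forming a Fano plane configuration, and then to use Lemma~\ref{l-P2-auto} to move this configuration onto the seven $\F_2$-rational points of $\mathbb P^2_{\F_2} \subset \mathbb P^2_k$, thereby identifying $S$ with $X$.

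First I would collect the negative curves on $S$. Since $Q$ is the strange point of $C$, every line through $Q$ is tangent to $C$, so for each $i \in \{1, 2, 3\}$ there is a unique line $\ell_i$ through $Q$ tangent to $C$ at $P_i$. By tracking intersection numbers through the seven successive blowups, I would verify that the following seven curves on $S$ form a collection of pairwise disjoint $(-1)$-curves: the exceptional divisor $E_Q$ over $Q$, the three exceptional divisors $H_1, H_2, H_3$ of the last blowup $S \to S'_0$, and the strict transforms $\tilde m_{ij}$ of the three lines $m_{ij}$ through pairs $P_i, P_j$. Analogously, the strict transforms $\tilde C$, $G'_1, G'_2, G'_3$ of the exceptional divisors of $\psi$, together with $\tilde \ell_1, \tilde \ell_2, \tilde \ell_3$, form seven pairwise disjoint $(-2)$-curves; the computation $\tilde \ell_i^2 = -2$ uses crucially that the blowup centre on $G_i$ in the last step lies on the strict transform of $\ell_i$, a consequence of the tangency of $\ell_i$ with $C$ at $P_i$ in characteristic two.

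Next I would simultaneously contract the seven $(-1)$-curves via $\pi\colon S \to S'$, obtaining a smooth projective surface with $\rho(S') = 1$. Since $\pi(E_Q)$ is a $k$-rational point of $S'$, a Severi--Brauer argument in the spirit of Lemma~\ref{l-SB} yields $S' \cong_k \mathbb P^2_k$. Under $\pi$, each of the seven $(-2)$-curves has self-intersection $-2 + 3 = 1$ and so maps to a line on $\mathbb P^2_k$. Writing $q := \pi(E_Q)$, $r_i := \pi(H_i)$ and $M_{ij} := \pi(\tilde m_{ij})$, the inherited incidence structure is: $\tilde C$ passes through $r_1, r_2, r_3$; each $G'_i$ passes through $r_i, M_{ij}, M_{ik}$; and each $\tilde \ell_i$ passes through $q, r_i, M_{jk}$, for $\{i, j, k\} = \{1, 2, 3\}$. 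The seven points and seven lines so obtained form the Fano plane.

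Finally I would apply Lemma~\ref{l-P2-auto} to the four points $q, M_{12}, M_{13}, M_{23}$: inspection of the incidence table shows that any Fano line through two of them passes through one of the excluded $r_i$, so these four points are in general position. I may then send them to $P_3 = [0:0:1], P_1 = [1:0:0], P_2 = [0:1:0], P_7 = [1:1:1]$ respectively. The Fano incidence then forces $r_1, r_2, r_3$ to coincide with the remaining three $\F_2$-rational points $P_4, P_5, P_6$, each cut out as the unique third $\F_2$-point on the appropriate $\F_2$-line. Hence $S$ is $k$-isomorphic to the blowup of $\mathbb P^2_k$ at the seven $\F_2$-rational points, which is $X$. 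The main obstacle is the intersection-number bookkeeping in the first step: tracking every blowup centre against every relevant proper transform to confirm the existence of exactly fourteen negative curves in the expected configuration. Lemma~\ref{l-strange-unique} plays a supporting role, ensuring that the conic $C$ is determined by $Q$ together with the three tangent lines at $P_1, P_2, P_3$, which fixes the normalisation of the Keel--McKernan data when translating between the two descriptions of the surface.
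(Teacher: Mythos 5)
Your proof is correct, but it follows a genuinely different route from the paper's. The paper splits the argument in two: first it shows that all Keel--M\textsuperscript{c}Kernan surfaces of degree $3$ over $k$ are $k$-isomorphic to one another (via Lemma~\ref{l-P2-auto} together with Lemma~\ref{l-strange-unique}, which says the strange conic is determined by $Q$ and the three marked points), and then it identifies the one particular such surface over $\F_2$ with $X^{(0)}$, where the seven blown-up points of the contracted $\mathbb P^2_{\F_2}$ are \emph{automatically} all of $\mathbb P^2_{\F_2}(\F_2)$ simply because there are exactly seven distinct $\F_2$-rational points; the general case then follows by base change. You instead work over $k$ throughout: you record the fourteen negative curves, contract the seven disjoint $(-1)$-curves, and pin down the resulting seven-point configuration up to $\mathrm{PGL}_3(k)$ by reading off the Fano-plane incidences from the images of the seven $(-2)$-curves, normalising four points in general position via Lemma~\ref{l-P2-auto} and letting the incidences force the remaining three. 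Your ``general position'' check is sound once one notes that the unique line of $\mathbb P^2_k$ through any two of the seven points is one of the seven configuration lines, so non-collinearity in the Fano plane really does imply non-collinearity in $\mathbb P^2_k$. What you buy is that Lemma~\ref{l-strange-unique} and the reduction to $\F_2$ become unnecessary; what it costs is the heavier intersection-theoretic bookkeeping you acknowledge, which does all check out (in particular $\tilde\ell_i^2=-2$ via the tangency, and $\tilde\ell_i$ meeting exactly $E_Q$, $H_i$ and $\tilde m_{jk}$).

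One small point to repair: Lemma~\ref{l-SB} is stated only over finite fields, so it does not literally apply to your contraction $\pi\colon S\to S'$ over an arbitrary $k$ of characteristic two. What you need is Ch\^atelet's theorem (a Severi--Brauer surface with a $k$-rational point is $k$-isomorphic to $\mathbb P^2_k$), applied to $S'$ using the $k$-point $\pi(E_Q)$; this is exactly the ingredient cited in the paper's proof of Lemma~\ref{l-SB}, so the fix is only a matter of citing the right statement, together with the (easy) verification that $S'\times_k\overline k$ is a smooth rational surface of Picard rank one and hence $\mathbb P^2_{\overline k}$.
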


\begin{proof}
We use the same notation as above. 
Let 
$$\pi\colon S_0 \to \mathbb P^1$$ 
be the induced $\mathbb P^1$-fibration. We divide the proof into two steps. 

\begin{step}\label{s-independent}
In this step, we show that  any two Keel--M\textsuperscript{c}Kernan surfaces $S$ and $S'$ of degree $3$ over $k$ are isomorphic over $k$. 

\medskip

There are three $k$-rational points $P_1, P_2, P_3\in C$ (resp. $P'_1, P'_2, P'_3\in C$) such that 
$S$ (resp. $S'$) is the blowup of $S_0$ twice along $P_1 \cup P_2 \cup P_3$ (resp. $P'_1 \cup P'_2 \cup P'_3$). 
Thanks to Lemma \ref{l-P2-auto}, 
there is a $k$-automorphism $\sigma\colon \mathbb P_k^2 \to \mathbb P_k^2$ such that $\sigma(Q)=Q$ and  
$\sigma(P_i)=P'_i$ for $i=1,2$ and $3$. 
Lemma \ref{l-strange-unique} implies that $\sigma(C)=C$ and, in particular, $\sigma$ induces a $k$-isomorphism $\widetilde{\sigma}:S \xrightarrow{\simeq} S'$, as desired. 
%Since $\pi\colon \varphi_0^{-1}(C) \to \mathbb P_k^1$ is a homeomorphism, 
%the three $k$-rational points $\pi(P_1), \pi(P_2), \pi(P_3)$ (resp. $\pi(P'_1), \pi(P'_2), \pi(P'_3))$ 
%are pairwise distinct.
%There is a $k$-automorphism $\sigma\colon \mathbb P_k^1 \to \mathbb P_k^1$ 
%such that $\sigma(\pi(P_i))=\pi(P'_i)$ for $i=1, 2, 3$, 
%which induces a $k$-isomorphism $\widetilde{\sigma}:S \xrightarrow{\simeq} S'$, as desired. 
\end{step}

\begin{step}\label{s-F2}
In this step, we assume that $k=\mathbb F_2$. Note that $C$ has exactly three $\F_2$-rational points: 
$$Q_1:=[1:0:0], \quad Q_2:=[0:1:0], \quad Q_3:=[1:1:1].$$
Let
$$P_1:=\varphi^{-1}_0(Q_1), \quad P_2:=\varphi_0^{-1}(Q_2), \quad P_3:=\varphi_0^{-1}(Q_3),$$
and $S$ to be the Keel--M\textsuperscript{c}Kernan surface of degree $3$ over $\mathbb F_2$ as above. 
We now show that $S$ is $\F_2$-isomorphic to $X^{(0)}$ defined in Notation~\ref{n-2}. 

\medskip

There are pairwise disjoint $(-1)$-curves $E_1, \dots, E_7$ on $S$ over $\F_2$, 
i.e. for any $i=1, \dots, 7$, 
$E_i$ is $\F_2$-isomorphic to $\mathbb P^1_{\mathbb F_2}$ and satisfies $K_S \cdot E_i=E_i^2=-1$. 
Indeed, we can check that the following seven curves listed below satisfy these properties. 
\begin{itemize}
\item The exceptional curve over $Q$ is a $(-1)$-curve over $\F_2$,  
\item For any $i=1, 2, 3$, the exceptional curve over $Q_i$ obtained by the second blowup 
is a $(-1)$-curve over $\F_2$, and  
\item For any $1 \leq i < j \leq 3$, the proper transform of the $\F_2$-line, passing through 
$Q_i$ and $Q_j$, is a $(-1)$-curve over $\F_2$. 
\end{itemize}
Let $\psi\colon S \to T$ 
be the birational morphism with $\psi_*\MO_S=\MO_T$ that contracts $E_1, \dots, E_7$. 
Since $T$ is a projective scheme over $\F_2$ 
whose base change to $\overline{\F}_2$ is a projective plane, 
it follows that $T$ is $\F_2$-isomorphic to $\mathbb P^2_{\F_2}$ by Lemma~\ref{l-SB}. 
Thus, $S$ is obtained by the blowup along all the $\F_2$-rational points of $\mathbb P^2_{\F_2}$
which implies $S \simeq X^{(0)}$ (cf. Notation~\ref{n-2}), as desired. 
\end{step}

By Step~\ref{s-independent} and Step~\ref{s-F2}, we are done. 
\end{proof}

\appendix\def\thesection{A}
\section{Kawamata--Viehweg vanishing for smooth del Pezzo surfaces}

By Theorem~\ref{t-cex-kvv}, there exists a smooth weak del Pezzo surface of characteristic $2$
which violates Kawamata--Viehweg vanishing.  
%for some klt boundary $\Delta$. 
%For smooth del Pezzo surfaces, we can not get such examples by the following proposition. 
We now show that Kawamata--Viehweg vanishing holds on smooth del Pezzo surfaces. 

\begin{prop}\label{p-A}
Let $k$ be an algebraically closed field of characteristic $p>0$. 
Let $X$ be a smooth projective surface over $k$ such that $-K_X$ is ample and let 
$(X, \Delta)$ be a klt pair for some effective $\Q$-divisor $\Delta$.
Let $D$ be a Cartier divisor such that 
$D-(K_X+\Delta)$ is nef and big. 

Then $H^i(X, \MO_X(D))=0$ for $i>0$. 
\end{prop}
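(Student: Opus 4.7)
The cases $i \geq 3$ are vacuous since $\dim X = 2$, and the case $i=2$ follows immediately from Serre duality: because $D-(K_X+\Delta)$ is big and $\Delta \geq 0$, the divisor $D-K_X$ is big, hence $K_X-D$ is not pseudo-effective, and therefore
\[
H^2(X,\MO_X(D)) \cong H^0(X,\MO_X(K_X-D))^{*} = 0.
\]
So the real content is the vanishing of $H^1(X,\MO_X(D))$, and the plan is to obtain it by lifting the entire configuration to characteristic zero and quoting the classical Kawamata--Viehweg theorem.

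Since $-K_X$ is ample and $X$ is smooth, $X$ is a smooth del Pezzo surface, and its deformation theory is unobstructed: the obstruction space $H^2(X,T_X)$ is Serre-dual to $H^0(X,\Omega^1_X \otimes \omega_X)$, which vanishes thanks to the anti-ampleness of $K_X$. Hence $X$ admits a smooth projective lift $\mathcal{X} \to \Spec W(k)$. Moreover, because $X$ is a rational surface we have $H^2(X,\MO_X)=0$, so every line bundle on $X$ extends to $\mathcal{X}$; in particular $\MO_X(D)$ lifts to some $\mathcal{L}$. The prime components of the boundary $\Delta$ are (up to replacement) rigid curves on the del Pezzo surface $X$ and can be lifted to relative Cartier divisors on $\mathcal{X}$, producing a $\Q$-divisor $\widetilde{\Delta}$ on $\mathcal{X}$ specialising to $\Delta$.

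With such a lift in place, the next step is to check that the numerical hypotheses transfer to the generic fibre $X_\eta$: the pair $(X_\eta,\widetilde{\Delta}_\eta)$ remains klt by openness of klt in flat families of pairs, and $\mathcal{L}_\eta - (K_{X_\eta}+\widetilde{\Delta}_\eta)$ is still nef and big because numerical classes are preserved on a smooth proper family over a complete dvr and bigness is open in the N\'eron--Severi space. The classical Kawamata--Viehweg vanishing in characteristic zero then yields $H^1(X_\eta,\mathcal{L}_\eta)=0$, and upper semi-continuity of cohomology over $\Spec W(k)$ forces $H^1(X,\MO_X(D))=0$, as required.

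The delicate step is the simultaneous lift of the klt pair $(X,\Delta)$ in a way that propagates both the klt property and the nef-and-big property to the generic fibre. In particular, one may first need to replace $\Delta$ by a $\Q$-linearly equivalent effective perturbation supported on curves with well-behaved deformation theory before lifting. Everything else --- the existence of the lift of $X$, the ability to lift line bundles thanks to $H^2(X,\MO_X)=0$, and the descent of the vanishing via semi-continuity on the complete local base $\Spec W(k)$ --- is standard once this lifting issue has been resolved.
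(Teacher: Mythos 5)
Your reduction of the problem to $i=1$ is fine (the $H^2$ vanishing via Serre duality and bigness of $D-K_X$ is correct), but the main step of your argument fails for a fundamental reason: semicontinuity goes the wrong way. For a flat projective family $\mathcal X \to \Spec W(k)$ with a line bundle $\mathcal L$, upper semicontinuity says that $h^1(X_0,\mathcal L_0) \geq h^1(X_\eta,\mathcal L_\eta)$, i.e.\ cohomology can only \emph{jump up} at the closed (characteristic $p$) fibre. So $H^1(X_\eta,\mathcal L_\eta)=0$ on the characteristic-zero generic fibre gives no control on $H^1(X_0,\mathcal L_0)$: the module $R^1f_*\mathcal L$ could be a nonzero $p$-torsion $W(k)$-module, which dies at the generic point but survives modulo $p$. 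This is precisely the mechanism by which Kodaira-type vanishing fails in characteristic $p$ even for varieties that lift to $W(k)$; the correct way to extract vanishing from a lift is not semicontinuity but the Deligne--Illusie/Hara machinery (lifting of the pair to $W_2(k)$ together with degeneration of the logarithmic Hodge--de Rham spectral sequence), which is an entirely different argument. Indeed, the paper's own Corollary after Theorem~\ref{t-cex-kvv} deduces \emph{non}-liftability of a pair from the failure of vanishing, which shows how delicate this route is.

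There are also unaddressed gaps earlier in your argument: the components of $\Delta$ need not be rigid, and lifting an irreducible curve $C\subset X$ as a relative Cartier divisor requires lifting the defining section of $\MO_X(C)$, which is obstructed by $H^1(X,\MO_X(C))$; moreover the transfer of the klt condition to the generic fibre is asserted rather than proved. But even granting all of this, the final step does not close.

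For comparison, the paper's proof stays entirely in characteristic $p$: after perturbing so that $A:=D-(K_X+\Delta)$ is ample, it runs a $(\Delta+A)$-MMP $f\colon X\to Y$ (which is automatically a $D$-MMP, so $H^i(X,\MO_X(D))\simeq H^i(Y,\MO_Y(f_*D))$ by relative Kawamata--Viehweg vanishing for birational morphisms of surfaces), reduces to the case where $\Delta+A$ is nef, and then concludes from the known fact that $H^i(X,\MO_X(D))=0$ on a smooth del Pezzo surface whenever $D-K_X$ is nef and big.
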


\begin{proof}
After perturbing $\Delta$, we may assume that $D-(K_X+\Delta)$ is ample. 
We define $A:=D-(K_X+\Delta)$. 
%We may write  $\Delta+A\sim_{\mathbb Q} P+N$ as the Zariski decomposition of $\Delta+A$, 
%where $P$ is the positive part and $N$ is the negative part. 
%We may assume that $P$ is an effective nef and big $\Q$-divisor and 
%$N$ is an effective $\Q$-divisor such that each irreducible component $N_i$ satisfies $P \cdot N_i=0$. 
We run a $(\Delta+A)$-MMP $f:X \to Y$. 
Since $-K_X$ is ample, $Y$ is also a smooth del Pezzo surface. 
Moreover, this MMP can be considered as a $(K_X+\Delta+A)$-MMP. 
By  Kawamata--Viehweg vanishing theorem for  birational morphisms 
(cf. \cite[Theorem 10.4]{Kol13}, \cite[Theorem~2.12]{Tan15}), it follows that 
$$H^i(X, \MO_X(D)) \simeq H^i(Y, f_*\MO_X(D)) \simeq H^i(Y, \MO_Y(f_*D))$$
for any $i$, where the latter isomorphism follows from the fact that $f$ is obtained by running a $D$-MMP. 

Therefore, after replacing $X$ by $Y$, 
we may assume that $\Delta+A$ is nef. 
Thus, $D-K_X$ is nef and big. 
In this case, it is well-known that $H^i(X, \MO_X(D))=0$ (e.g. see  \cite[Proposition~3.2]{Muk13} or \cite[Proposition 3.3]{CT}). 
\end{proof}

%\begin{rem}
%A smooth del Pezzo surface $X$ satisfies the following strong property: 
%if $P$ is a pseudo-effective Cartier divisor on $X$, 
%then $H^0(X, \MO_X(P)) \neq 0$. 
%Indeed, by running a $P$-MMP, we may assume that $P$ is nef, and 
%it follows from the Riemann--Roch theorem that $H^0(X, \MO_X(P)) \neq 0$. 
%We can not hope this property for a weak del Pezzo surface by Proposition~\ref{p-non-effective}. 
%\end{rem}

%\appendix\def\thesection{B}
%\section{Questions}
%\begin{ques}
%How about the higher dimensional generalisation of the purely inseparable cover of $\mathbb P^n$? 
%(Can we find log Fano counter-example to KVV in characteristic $p=3$ or larger?) 
%\end{ques}

\end{document}